\newcommand{\bbm}{\begin{bmatrix}}
\newcommand{\ebm}{\end{bmatrix}}
\newcommand{\bse}{\begin{subequations}}
\newcommand{\ese}{\end{subequations}}
\newcommand{\ones}{\mathds{1}}
\DeclareMathOperator{\col}{col}
\DeclareMathOperator{\im}{im}
\DeclareMathOperator{\diag}{diag}
\DeclareMathOperator{\bdiag}{blockdiag}
\newcommand{\dst}{\displaystyle}
\newcommand{\R}{\mathbb{R}}
\newcommand{\calH}{\ensuremath{\mathcal{H}}}
\newcommand{\calG}{\ensuremath{\mathcal{G}}}
\newcommand{\calV}{\ensuremath{\mathcal{V}}}
\newcommand{\calE}{\ensuremath{\mathcal{E}}}
\newcommand{\calN}{\ensuremath{\mathcal{N}}}
\newcommand{\ew}{\hspace*{\fill} $\square$\noindent}
\def\be{\begin{equation}}
\def\ee{\end{equation}}
\def\ba{\begin{array}}
\def\ea{\end{array}}
\def\eqa{\begin{eqnarray}}
\def\eqe{\end{eqnarray}}
\newtheorem{example}{Example}
\newtheorem{assum}{Assumption}
\newtheorem{definition}{Definition}
\newtheorem{theorem}{Theorem}
\newtheorem{lem}{Lemma}
\newtheorem{corollary}{Corollary}
\newtheorem{proposition}{Proposition}
\newtheorem{remark}{Remark}
\def\QEDopen{{\setlength{\fboxsep}{0pt}\setlength{\fboxrule}{0.2pt}\fbox{\rule[0pt]{0pt}{1.3ex}\rule[0pt]{1.3ex}{0pt}}}}
\def\QED{\QEDopen} 
\newenvironment{pfof}[1]{\vspace{1ex}\noindent{\itshape Proof of
    #1:}\hspace{0.5em}} {\hfill\QED\vspace{1ex}}
\newcommand{\cdp}[1]{{\color{black}#1}}
\newcommand{\cdpbis}[1]{{\color{black}#1}}
\newcommand{\cdpter}[1]{{\color{black}#1}}
\newcommand{\n}[1]{\color{black}{#1}}
\newcommand{\nm}[1]{{\color{black} #1}}
\newcommand{\nmo}[1]{{\color{black} #1}}
\tikzstyle{vertex}=[circle, shading = ball, ball color = white!100!white, minimum size = 15pt, draw, inner sep=0pt]  
\newcommand{\vertex}{\node[vertex]}           
\newcommand{\weight}[1]{{\footnotesize $\mathit{#1}$}}
\title{A feedback control algorithm to steer networks to a Cournot-Nash equilibrium
}
\author{Claudio De Persis \;\and \; Nima Monshizadeh
\thanks{Claudio De Persis and Nima Monshizadeh are with the Engineering and Technology Institute, University of Groningen, The Netherlands, {\tt\small c.de.persis@rug.nl, n.monshizadeh@rug.nl}}}
\begin{document}

\maketitle

\begin{abstract}
We propose a distributed feedback control that steers a dynamical network to a prescribed equilibrium corresponding to the so-called Cournot-Nash equilibrium.
The network dynamics considered here are a class of passive nonlinear second-order systems, where production and demands act as external inputs to the systems. While productions are assumed to be controllable at each node, the demand is determined as a function of local prices according to the utility of the consumers.
Using reduced information on the demand, the proposed controller guarantees the convergence of the closed loop system to the optimal equilibrium point dictated by the Cournot-Nash competition. 
\end{abstract}



\section{introduction}
In recent years there has been a renewed interest in controllers that can steer a given dynamical system to an optimal steady state, mostly motivated by research connected to power networks, where, for instance, given a certain demand, the problem of dynamically adjusting the generation in order to satisfy the demand while fulfilling an optimal criterion at steady state, e.g., minimizing the generation costs or maximizing the social welfare, has been formulated and addressed with different approaches. Other application domains of interests are flow and heat networks, data centers as well as logistic systems. 

Loosely speaking, the proposed approaches to dynamically control networks while fulfilling steady-state optimality criteria  can be classified in two categories. One relies on primal-dual gradient algorithms, which solve  the optimization problem, and apply on-line the computed control input to the physical network, possibly taking into account feedback signals coming from the network for improved robustness \cite{zhang-pap-aut15, Li-Zhao-Chen-TCNS16, stegink-et-alt-ac17, shiltz-et-al}. This approach returns control algorithms that can handle general convex objective functions and constraints but typically requires either a good knowledge of the demand, or a number of measurements of network variables, sometimes more than what is really available in practice.   

A second category relies on internal-model based controllers \cite{dorfler-et-al-tcns16, trip-et-al-aut16, andreasson2014distributed}, popularized under the acronym DAPI (distributed averaging proportional integral controllers) \cite{simpson-porco-aut13}, which can typically deal with linear-quadratic cost functions only, but can on the other hand  tackle uncertainty in the power demand. 

This paper aims at contributing to the second category of results allowing for some degree of uncertainty in the demand and  considering an economic objective 
which is different from the ones considered so far -- economic optimal dispatch, implemented via a distributed \cite{dorfler-et-al-tcns16, trip-et-al-aut16} or a semi-decentralized control architecture \cite{dorfler-grammatico-aut17}. In fact, our interest is to design controllers at the producers that aim at maximizing their profit according to a Cournot model of competition \cite{mas1995microeconomic}. 

Other works are available that have solved problems different from the economic optimal  dispatch problem, e.g., \cite{ashish,stegink-et-al}, where algorithms solving the optimal power generation model under a Bertrand model of competition and a primal-dual setting have been proposed.  

Cournot models of competition, and the resulting Cournot-Nash equilibrium \cite{johari2005efficiency}, \cite{tsitsiklis-yu}, \cite{bose-cdc2014} are very well studied topics in game theory. Pseudo-gradient dynamical algorithms that  converge to Cournot-Nash equilibria have  been extensively investigated in the literature \cite{nagurney.book93, 
grammatico-tcns17, yi-pavel}. 

What differentiates our results from the existing one are two features. First, as highlighted before,  we devise a feedback control algorithm that does not rely on the exact knowledge of the demand in the network. Second, this algorithm is interconnected in a feedback loop with a given physical network, and the resulting closed-loop system is analyzed as a whole, showing that the physical network converge to an equilibrium at which the controller output equals the value of the Cournot-Nash equilibrium.  Since the controller state  variable has the interpretation of a  price, we at times refer to the feedback control also as a pricing mechanism. 

Although the motivation for this investigation was inspired by problems in power networks, in order to convey the results to an audience that is not necessarily interested to power networks, we decided to present our results for a class of nonlinear second-order passive dynamical networks \cite{arcak}, in which power networks fall after suitable modifications.

The rest of the paper is organized as follows. In the next section, we recall basic concepts and results about the Cournot model of competition. Section \ref{sec:3} contains the main results of the paper, namely the design and analysis of a distributed feedback controller steering the closed-loop system towards a Cournot-Nash equilibrium. A numerical case study is discussed in Section \ref{s:simulation}. Conclusions are drawn in Section \ref{sec:4}. 
\section{Cournot Competition}
%
In this section, we revisit a few results  about the Cournot model of competition (\cite{johari2005efficiency,mas1995microeconomic}), which are used to determine the optimal triple of  supply, demand and price.  
This characterization is instrumental to formulate our main results in the next section, where a dynamic controller is designed to steer a given network to such optimal Cournot triple. 

In a model of Cournot competition, $n$ producers produce a homogeneous good that is demanded by $m$ consumers. The price $p$ of the good is assumed to be determined by the good productions, while the consumers are price takers, a scenario motivated by having few producers and many consumers.


Each producer $i$ aims at maximizing its  profit $\Pi_{gi}\cdp{:\mathbb{R}_{\ge 0}^n\to \mathbb{R}}$, given the production of other firms. The profit is defined by the objective function
\[
\Pi_{gi}(P_{gi}, P_{-gi}) = p(P_g) P_{gi} - C_{gi}\cdp{(P_{gi})},\, i\in I:=\{1,2,\ldots, n\},
\]
where $P_{gi}\in \mathbb{R}_{\ge 0}$ is the good production by producer $i$,  $P_g={\rm col}(P_{g1},\ldots, P_{gn})\in \mathbb{R}^n$ is the vector of all the productions, $\cdp{P_{-gi}}\in \mathbb{R}^{n-1}$ is the vector obtained by removing the $i$th element of $P_g$, {$p:\R^n \rightarrow \R$ is the price function,} and $C_{gi}:\mathbb{R}_{\ge 0}\to\mathbb{R}$ is a function satisfying the following assumption \cite{johari2005efficiency}:
\begin{assum}\label{assmpt:cost.function}
For each $i$, the cost function $C_{gi}$ is convex, non-decreasing and continuously differentiable for $P_{gi}\in \mathbb{R}_{\ge 0}$. \cdp{Moreover, $C_{gi}(0)=0$. }
\end{assum}

Given a price {$\overline p\in \mathbb{R}$}, each consumer $j$ wants to maximize its utility, described by the function
\[
\Pi_{dj}(P_{dj},{\overline p}) = U_j (P_{dj}) - {\overline p}P_{dj},\; j\in J:=\{1,2,\ldots, m\}.
\]
where $P_{dj}\in \n{\mathbb{R}_{\geq 0}}$ is the good demand by consumer $j$ and $U_{j}:\mathbb{R}_{\ge 0}\to \mathbb{R}$ is a 
continuously differentiable function. 
{To ease the notation, in the sequel, we denote $\overline p$ simply by $p$.}
{Now, given a price {$p\in \mathbb{R}$}}, we {consider} the utility maximisation problem
\be\label{ut.max.pbm}
\sup_{P_{dj}\ge 0} \Pi_{dj}(P_{dj}, {p})
\ee

\begin{assum}\label{a:Uj}
For each $j\in J$ the utility function $U_j:\mathbb{R}_{\ge 0}\to \mathbb{R}$ is continuously differentiable and \n{strictly} concave. Moreover,  $U_j':\mathbb{R}_{\ge 0}\to \mathbb{R}$ { satisfies $\lim_{P_{dj}\rightarrow +\infty} U'_j(P_{dj})=-\infty$ and $U_{j}'(0)>0$.}
\end{assum}
%
%
\n{\subsection{Utility Maximisation}}
The utility maximisation problem admits the following solution:
\begin{proposition}\label{first.order.optimality.condition}
Let  Assumption \ref{a:Uj} hold. 
Then, $P_{dj}$ is solution to  \eqref{ut.max.pbm} if {and only if}
\be\label{e:optimal.Pd}
P_{dj}=  \pi_{j}(p),
\ee
where $\pi_j: \R \rightarrow \R_{\geq 0}$ is defined as
\be\label{pi.j}
\pi_j(\lambda)=\left\{
\ba{ccr}
(U_{j}')^{-1}(\lambda) & \textrm{if} & \lambda \nm{<} U_j'(0)\\[2mm]
0 & \textrm{if} & \lambda \nm{\geq} U_j'(0).
\ea
\right.
\ee
\end{proposition}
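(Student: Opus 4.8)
The statement is a first-order optimality characterization for the concave program $\sup_{P_{dj}\ge 0}\,\Pi_{dj}(P_{dj},p)$ with $\Pi_{dj}(P_{dj},p)=U_j(P_{dj})-pP_{dj}$. The plan is to treat this as a one-dimensional concave maximization over the closed half-line and invoke the standard KKT / first-order conditions, being careful about the boundary at $P_{dj}=0$. Since $U_j$ is strictly concave and continuously differentiable (Assumption \ref{a:Uj}), the objective $\Pi_{dj}(\cdot,p)$ is strictly concave in $P_{dj}$, so a maximizer, if it exists, is unique, and it is characterized exactly by the variational inequality $\Pi_{dj}'(P_{dj}^\star,p)(P_{dj}-P_{dj}^\star)\le 0$ for all $P_{dj}\ge 0$, i.e. $(U_j'(P_{dj}^\star)-p)(P_{dj}-P_{dj}^\star)\le 0$ for all $P_{dj}\ge 0$.

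First I would establish existence: $\Pi_{dj}'(P_{dj},p)=U_j'(P_{dj})-p$, and by Assumption \ref{a:Uj}, $U_j'$ is continuous and strictly decreasing with $U_j'(P_{dj})\to-\infty$ as $P_{dj}\to+\infty$. Hence $\Pi_{dj}(\cdot,p)$ is coercive (goes to $-\infty$), so by Weierstrass a maximizer exists on $\mathbb{R}_{\ge 0}$; strict concavity makes it unique. Then I would split into two cases according to the sign of $U_j'(0)-p$. If $p<U_j'(0)$, then since $U_j'$ is continuous, strictly decreasing, and eventually negative, there is a unique point $\hat P>0$ with $U_j'(\hat P)=p$; this $\hat P=(U_j')^{-1}(p)$ is the unconstrained stationary point, it lies in the interior of $\mathbb{R}_{\ge 0}$, and it satisfies the variational inequality, so it is the maximizer — giving $P_{dj}=\pi_j(p)=(U_j')^{-1}(p)$. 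If $p\ge U_j'(0)$, then $\Pi_{dj}'(0,p)=U_j'(0)-p\le 0$, and by strict concavity $\Pi_{dj}'(P_{dj},p)<0$ for all $P_{dj}>0$, so $\Pi_{dj}(\cdot,p)$ is (strictly) decreasing on $\mathbb{R}_{\ge 0}$; hence the maximizer is the left endpoint $P_{dj}=0=\pi_j(p)$. This proves $P_{dj}$ solves \eqref{ut.max.pbm} $\iff$ $P_{dj}=\pi_j(p)$, since uniqueness of the maximizer gives the "only if" direction and the above computations give the "if" direction.

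A couple of points to handle carefully. One is the well-definedness of $(U_j')^{-1}$: strict concavity of $U_j$ only gives that $U_j'$ is nonincreasing a priori, but combined with the hypothesis $U_j'(P_{dj})\to-\infty$ and continuity, $U_j'$ restricted to the relevant range is a continuous strictly decreasing bijection onto $(-\infty,U_j'(0)]$, so $(U_j')^{-1}$ is well-defined on $(-\infty,U_j'(0))$, which is exactly the domain used in \eqref{pi.j}. (Strictness of monotonicity is needed to rule out flat pieces; I would note that a $C^1$ strictly concave function has strictly decreasing derivative.) The second is the boundary case $p=U_j'(0)$: here $0$ is still the unique maximizer even though $\Pi_{dj}'(0,p)=0$, because the derivative is strictly negative immediately to the right; the definition in \eqref{pi.j} assigns $\pi_j(p)=0$ for $\lambda\ge U_j'(0)$, consistent with this.

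**Main obstacle.** There is no deep obstacle; the only thing requiring care is the bookkeeping at the boundary $P_{dj}=0$ and the precise interplay between "strictly concave," "continuously differentiable," and the limit condition to guarantee that $(U_j')^{-1}$ is a genuine function on the stated domain and that the KKT point is the global maximizer rather than merely a critical point. The cleanest route is to phrase everything through the first-order variational inequality for concave functions on a convex set (which is both necessary and sufficient for concave objectives), thereby getting the "if and only if" in one stroke and avoiding a separate sufficiency argument.
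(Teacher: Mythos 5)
Your argument is correct and follows essentially the same route as the paper, whose proof is simply the one-line remark that the result ``descends from the KKT conditions'': you have written out in full the standard first-order/variational-inequality characterization for a one-dimensional strictly concave program on $\mathbb{R}_{\ge 0}$, with the existence, uniqueness, and boundary-case details that the paper leaves implicit. No gaps.
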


\begin{proof}
The proof descends  from the KKT conditions. 
\end{proof}

\subsection{Supply-demand matching}


{As will be discussed in Section \ref{sec:3}, we are interested in a supply-demand balancing condition, where the total generation is equal to the total demand, namely}
 \be\label{balance0}
\mathds{1}^\top P_g=\mathds{1}^\top P_d,
\ee
where $P_d=(P_{d1},\ldots, P_{dm})$ and $\mathbb{1}$ denotes the vector of all ones of suitable size.
{Under this balancing condition, the price can be written as a function of the total production, as stated next.}
%
%

\begin{proposition}\label{prop.price}
Let Assumption \ref{a:Uj} and the balance equation \eqref{balance0} hold, \cdp{with $\ones^\top P_g\ge 0$}. 
Then, there exists a continuous function $u:\mathbb{R}_{\ge 0}\to \mathbb{R}$ satisfying
\be\label{e:price-general}
{u(\mathds{1}^\top P_g)=p(P_g)}
\ee
with the following properties: (i) $u(0)>0$; (ii) $u$ is strictly decreasing; (iii) $\lim_{q\to +\infty} u(q)=-\infty$.
\end{proposition}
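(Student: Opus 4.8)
The plan is to build $u$ explicitly from the aggregate demand function and then invert it. First I would define the aggregate demand $D:\mathbb{R}\to\mathbb{R}_{\ge 0}$ by $D(\lambda)=\sum_{j\in J}\pi_j(\lambda)$, where $\pi_j$ is the per-consumer demand from Proposition \ref{first.order.optimality.condition}. Under Assumption \ref{a:Uj}, each $U_j'$ is continuous and strictly decreasing (strict concavity), so on the interval $\lambda<U_j'(0)$ the inverse $(U_j')^{-1}$ is well defined, continuous and strictly decreasing; together with the flat piece equal to $0$ for $\lambda\ge U_j'(0)$, this makes each $\pi_j$ continuous and non-increasing on all of $\mathbb{R}$, strictly decreasing wherever it is positive. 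Hence $D$ is continuous and non-increasing on $\mathbb{R}$, and strictly decreasing on the set where it is positive (i.e. for $\lambda<\max_j U_j'(0)$, which is itself positive since each $U_j'(0)>0$). From $\lim_{P_{dj}\to+\infty}U_j'(P_{dj})=-\infty$ one gets $\lim_{\lambda\to-\infty}\pi_j(\lambda)=+\infty$, so $D(\lambda)\to+\infty$ as $\lambda\to-\infty$; and $D(\lambda)=0$ for $\lambda\ge\max_j U_j'(0)$.

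Next I would restrict $D$ to the half-line $\lambda\le \bar\lambda:=\max_j U_j'(0)$, where it is a continuous, strictly decreasing bijection onto $[0,+\infty)$, and define $u:=\bigl(D|_{(-\infty,\bar\lambda]}\bigr)^{-1}:\mathbb{R}_{\ge 0}\to(-\infty,\bar\lambda]\subset\mathbb{R}$. By construction $u$ is continuous (inverse of a continuous strictly monotone map on an interval) and strictly decreasing, which gives properties (ii) and (iii): $\lim_{q\to+\infty}u(q)=-\infty$ follows from $D(\lambda)\to+\infty$ as $\lambda\to-\infty$. For property (i), $u(0)>0$: since $D(\lambda)=0$ exactly for $\lambda\ge\bar\lambda$ and $u(0)$ is the smallest such $\lambda$ (by the restriction), $u(0)=\bar\lambda=\max_j U_j'(0)>0$.

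It remains to verify the identity \eqref{e:price-general}, i.e. that this $u$ reproduces the price function along trajectories satisfying the balance \eqref{balance0}. Here I would use that, at the operating points of interest, the price $p(P_g)$ is precisely the price at which consumers optimally choose demands $P_{dj}=\pi_j(p(P_g))$ that sum to the given total production: by \eqref{balance0} and Proposition \ref{first.order.optimality.condition}, $\mathds{1}^\top P_g=\mathds{1}^\top P_d=\sum_j\pi_j(p(P_g))=D(p(P_g))$. Since $\mathds{1}^\top P_g\ge 0$ and $D$ takes every nonnegative value exactly once on $(-\infty,\bar\lambda]$, and $p(P_g)$ lies in that range (any price above $\bar\lambda$ would force zero total demand, hence zero production, consistent only with $p=\bar\lambda$ on the boundary), applying $u=\left(D|_{(-\infty,\bar\lambda]}\right)^{-1}$ to both sides yields $u(\mathds{1}^\top P_g)=p(P_g)$.

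The main obstacle I anticipate is the last step: pinning down precisely in what sense $p(P_g)$ equals the market-clearing price and why it must land in the invertible branch $(-\infty,\bar\lambda]$ rather than the degenerate flat region. This requires being careful about the modelling convention linking the exogenous price function $p(\cdot)$ to consumer optimality under the balance constraint — essentially that \eqref{e:price-general} is really a \emph{definition}/consistency requirement on the pair $(p,u)$, and the content of the proposition is that the $u$ constructed from aggregate inverse demand is the unique continuous function with the stated monotonicity and limit properties that is compatible with it. The monotonicity, continuity, and limit bookkeeping for $D$ and its inverse are routine given Assumptions \ref{a:Uj}.
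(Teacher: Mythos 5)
Your proposal is correct and follows essentially the same route as the paper: you define the aggregate demand $D(\lambda)=\sum_j\pi_j(\lambda)$ (the paper's $\pi$), invert it on the branch where it is strictly decreasing, and extend to $q=0$ by the value $\max_j U_j'(0)$, which coincides with the paper's $\pi^{-1}(0^+)$. The boundary subtlety you flag at the end (why $p(P_g)$ must lie in the invertible branch) is handled no more explicitly in the paper's own proof, so it is not a gap relative to the reference argument.
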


\begin{proof}
In view of Assumption \ref{a:Uj}, the functions $\pi_{j}(\lambda)$ in \eqref{e:optimal.Pd} are continuous  with range equal to 
the whole $\mathbb{R}_{\ge 0}$. 
Each function 
$\pi_{j}(\lambda)$
is strictly decreasing on the interval  
$(-\infty, U_j'(0)]$ 
and identically zero on the interval \cdp{$[U_j'(0), +\infty)$.} Define the function {$\pi(\lambda) {:=}\sum_{j\in J} \pi_{j}(\lambda)$.} This is a continuous  function  with  range equal to 
the whole $\mathbb{R}_{\ge 0}$.
Moreover it is strictly decreasing on 
$(-\infty, \max_j U_j'(0)]$ 
and identically zero on the interval 
$[\max_j U_j'(0), +\infty)$. 
Let $q:=\pi(\lambda)$.
Then, the function $\pi$ can be inverted, on the interval {where $q\in \R_{>0}$}, to obtain $\lambda=\pi^{-1}(q)$.
%
Now, we define $u:\mathbb{R}_{\ge 0}\to \mathbb{R}$ as 
\be\label{e:uq}
u(q)=\left\{
\ba{rcl}
\pi^{-1}(q) & if & q>0\\[1mm]
\pi^{-1}(0^+) & if & q=0,
\ea
\right.
\ee
where $\lim_{q\to 0^+} \pi^{-1}(q) =:\pi^{-1}(0^+)$.
By construction, $u$ is a continuous function, which is strictly decreasing and satisfies the properties (i),  (ii), (iii) of the statement. 
{Moreover, by substituting $q=\ones^\top P_g$ in \eqref{e:uq}, using \eqref{e:optimal.Pd} together with the balancing condition \eqref{balance0}, and noting monotonicity of $u$, we obtain the equality \eqref{e:price-general}.}
\end{proof}


\begin{example}
Consider the case of $2$ consumers with linear-quadratic utility functions 
\[
U_j(P_{dj})=-\frac{1}{2} P_{dj} Q_{dj} P_{dj}+ b_{dj} P_{dj},
\]
where $Q_{dj}, b_{dj}>0$, $j=1,2$. 
{For each $j$, we have}
\be\label{e:pi-lq}
\cdpbis{\pi_{j}(\lambda)}
=\left\{
\ba{rcl}
Q_{dj}^{-1}(b_{dj}-\lambda) & \textrm{if} & \lambda < b_{dj}\\
0 & \textrm{if} & \lambda \ge b_{dj}.
\ea
\right.
\ee
Let $b_{d1}<b_{d2}$. Then 
\[
\pi(\lambda)= \left\{\ba{lll}
\dst\sum_{j=1,2} Q_{dj}^{-1}(b_{dj}-\lambda) & if & \lambda \le b_{d1}\\[2mm]
Q_{d2}^{-1}(b_{d2}-\lambda) & if &  b_{d1}{<} \lambda \le b_{d2} \\
0 & if & \lambda {>} b_{d2}
\ea
\right. 
\]
and 
for $q=\pi(\lambda)$, $q>0$, we have
\[
\pi^{-1}(q)= \left\{\ba{lll}
-Q_{d2} q + b_{d2}
& if & 0<q\le Q_{d2}^{-1}(b_{d2}-b_{d1})\\
-\alpha q +\beta 
& if & q\ge Q_{d2}^{-1}(b_{d2}-b_{d1})
\ea
\right. 
\]
where
\[
 \beta = \frac{\sum_{j=1,2} \frac{b_{dj}}{Q_{dj}}}{\sum_{j=1,2} \frac{1}{Q_{di}}}, \quad \alpha= \frac{1}{\sum_{j=1,2} \frac{1}{Q_{di}}}.
\]
It then follows that 
\[
u(q)= \left\{\ba{lll}
-Q_{d2} q + b_{d2}
& if & 0\le q \le Q_{d2}^{-1}(b_{d2}-b_{d1})\\
-\alpha q +\beta 
& if & q\ge Q_{d2}^{-1}(b_{d2}-b_{d1}).
\ea
\right. 
\]
\end{example}
\ew

In view of \eqref{e:optimal.Pd} and \eqref{e:price-general}, we conclude that the optimal demands are given by
\be\label{e:optimal.Pd.3}
P_{dj}=\left\{
\ba{lcl}
(U_{j}')^{-1}(u(\mathds{1}^\top P_g)) & \textrm{if} & u(\mathds{1}^\top P_g) < U_j'(0)\\
0 & \textrm{if} & u(\mathds{1}^\top P_g) \ge U_j'(0).
\ea
\right.
\ee
By construction, the optimal demand $P_d$ detailed above satisfies $\ones^\top P_d=\ones^\top P_g$. 

\n{\subsection{Profit Maximisation: the Cournot-Nash equilibrium} }
Motivated by the discussion before, we consider a Cournot game consisting of the set of producers $I$ each one aiming at solving the maximisation problem 
\be\label{ump}
\max_{P_{gi}\ge 0}  \Pi_{gi}(P_{gi}, P_{-gi}),
\ee
with
\be\label{eq:profit}
\Pi_{gi}(P_{gi}, P_{-gi}) = 
\cdpter{p(P_g)}
P_{gi} - C_{gi}\cdp{(P_{gi})}.
\ee
More formally, we define the Cournot game
as follows:
\medskip{}
\begin{definition}
A Cournot game  $CG(I, (\Pi_{gi}, i\in I))$ consists of 
\begin{enumerate}[i)]
\item A set $I$ of producers (or players);
\item  A strategy $P_{gi} \in \mathbb{R}$ for each producer $i\in I$;
\item The convex and closed set  $\mathbb{R}^n_{\ge 0}$ of allowed strategies
$P_g=(P_{g1}, \ldots, P_{gn})$;
\item A payoff function $\Pi_{gi}(P_{gi}, P_{-gi})$, where for $P_g\in \mathbb{R}^n_{\ge 0}$, $\Pi_{gi}(P_{g})$ is continuous in $P_{g}$ and concave in $P_{gi}$ for each fixed $P_{-gi}$.  
\end{enumerate}
\end{definition}

The Cournot-Nash equilibrium is defined next \cite{johari2005efficiency}:
\begin{definition}
A Cournot-Nash equilibrium of the game $CG(I, (\Pi_{gi}, i\in I))$ is a vector $P_g^\star\in \mathbb{R}^n_{\ge 0}$ that for each $i\in I$ satisfies
\[
\Pi_{gi}(P_{gi}^\star, P_{-gi}^\star)\ge   \Pi_{gi}(P_{gi}, P_{-gi}^\star)
\]
for all $P_{gi}\in \mathbb{R}_{\ge 0}$.
\end{definition}

The existence of a Cournot-Nash equilibrium is a consequence of a well-know result on concave games due to  \cite{rosen1965existence}. Let us recall the definition of a concave game.
\begin{definition}
A concave game consists of
\begin{enumerate}[i)]
\item A set $I$ of players;
\item A strategy $x_i \in \mathbb{R}^{p_i}$ for each player $i\in I$;
\item The convex, closed and bounded set $R\subset \mathbb{R}^p$ of allowed strategies $x=(x_1, \ldots, x_n)$, with $p=\sum_{i=1}^n p_i$;
\item A payoff function $\varphi_i(x_i, x_{-i})$ for each player $i\in I$, where for $x\in S$, $\varphi_i (x)$ is continuous in $x$ and concave in $x_i$ for each fixed $x_{-i}$, with $S=P_1\times \ldots \times P_n$ and $P_i$ is the projection of $R$ on $\mathbb{R}^{m_i}$.  
\end{enumerate}
\end{definition}

A difference between a Cournot and a concave game is the lack of a bounded  set of bounded strategies for the Cournot game ($\mathbb{R}^n_{\ge 0}$ is clearly unbounded). However, following \cite[Proposition 2]{johari2005efficiency}, it can be shown that solving \eqref{ump} is equivalent to solving 
\be\label{ump.eq}
\max_{0\le P_{gi}\le \bar P_{gi}}  \Pi_{gi}(P_{gi}, P_{-gi}),
\ee
with $\bar P_{gi}$ some positive constant. As a matter of fact, $\Pi_{gi}(P_{gi}, P_{-gi}) = u(\mathds{1}^\top P_g) P_{gi} - C_{gi}\cdp{(P_{gi})}$ is zero at $P_{gi}=0$,  and by Assumptions \ref{assmpt:cost.function}, \ref{a:Uj} and Proposition \ref{prop.price}, there exists $\bar P_{gi}>0$ such that $\Pi_{gi}(\bar P_{gi}, P_{-gi})=0$ and $\Pi_{gi}(P_{gi}, P_{-gi})<0$ for $P_{gi}\ge \bar P_{gi}$. Hence, in the case 
of \eqref{ump.eq}, $R=S=[0, \bar P_{g1}]\times \ldots \times [0, \bar P_{gn}]$. Moreover, the payoff function $\Pi_{gi}(P_{gi}, P_{-gi})$ satisfies the properties of  a concave game by Assumptions \ref{assmpt:cost.function}, \ref{a:Uj} and Proposition \ref{prop.price}. Therefore the game defined by \eqref{ump.eq}, or equivalently by \eqref{ump}, is a concave game. It can then be concluded by \cite[Theorem 1]{rosen1965existence} that a Cournot-Nash equilibrium exists. 

\begin{theorem}\label{thm:Nash-rosen1965existence}
{\rm \cite[Theorem 1]{rosen1965existence}} Under Assumptions \ref{assmpt:cost.function} and \ref{a:Uj} there exists a Cournot-Nash equilibrium $P_g^\star$.
\end{theorem}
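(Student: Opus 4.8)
The plan is to obtain the statement as a direct corollary of Rosen's existence theorem for concave games \cite[Theorem 1]{rosen1965existence}; the actual work is to recast the Cournot game \eqref{ump} as a concave game in the sense recalled above. Two points need attention: (a) replacing the unbounded strategy space $\mathbb{R}^n_{\ge 0}$ by a compact box without altering the equilibria, and (b) checking, after substituting $p(P_g)=u(\mathds{1}^\top P_g)$ via Proposition \ref{prop.price}, that each payoff $\Pi_{gi}$ is continuous in $P_g$ and concave in $P_{gi}$ for every fixed $P_{-gi}$.

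For (a), observe that $\Pi_{gi}(0,P_{-gi})=-C_{gi}(0)=0$ for all $P_{-gi}$, using $C_{gi}(0)=0$ (Assumption \ref{assmpt:cost.function}). Since $u$ is strictly decreasing (Proposition \ref{prop.price}) and $P_{-gi}\in\mathbb{R}^{n-1}_{\ge 0}$, we have $u(\mathds{1}^\top P_g)=u(P_{gi}+\mathds{1}^\top P_{-gi})\le u(P_{gi})$; moreover $C_{gi}\ge 0$ on $\mathbb{R}_{\ge 0}$ since $C_{gi}$ is non-decreasing with $C_{gi}(0)=0$. Hence $\Pi_{gi}(P_{gi},P_{-gi})\le u(P_{gi})\,P_{gi}$, and as $u(q)\to-\infty$ when $q\to+\infty$, the right-hand side is strictly negative for all $P_{gi}$ beyond some $\bar P_{gi}>0$ that depends only on $u$, not on $P_{-gi}$. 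Thus any maximiser of $\Pi_{gi}(\cdot,P_{-gi})$ over $\mathbb{R}_{\ge 0}$ lies in $[0,\bar P_{gi}]$, so \eqref{ump} and \eqref{ump.eq} have the same solutions and therefore the same Nash equilibria; this is the reasoning of \cite[Proposition 2]{johari2005efficiency}.

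For (b), take $R=S=[0,\bar P_{g1}]\times\cdots\times[0,\bar P_{gn}]$, which is convex, closed and bounded. Continuity of $P_g\mapsto\Pi_{gi}(P_g)=u(\mathds{1}^\top P_g)P_{gi}-C_{gi}(P_{gi})$ on $R$ follows from continuity of $u$ (Proposition \ref{prop.price}) and of $C_{gi}$ (Assumption \ref{assmpt:cost.function}). For concavity of $P_{gi}\mapsto\Pi_{gi}(P_{gi},P_{-gi})$, note that $-C_{gi}$ is concave by Assumption \ref{assmpt:cost.function}, so it suffices that the revenue term $u(P_{gi}+\mathds{1}^\top P_{-gi})P_{gi}$ be concave in $P_{gi}$; this is where the monotone, strictly decreasing shape of $u$ inherited from the strictly concave utilities $U_j$ (through $u=\pi^{-1}$ with $\pi=\sum_j\pi_j$ and $\pi_j=(U_j')^{-1}$ on its decreasing branch, cf.\ \eqref{pi.j}) enters, the kinks of the $\pi_j$ at $U_j'(0)$ being handled via one-sided derivatives. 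Once all the requirements of the concave-game definition are verified, \cite[Theorem 1]{rosen1965existence} yields a Nash equilibrium $P_g^\star\in R\subset\mathbb{R}^n_{\ge 0}$ of \eqref{ump.eq}, which by (a) is a Cournot-Nash equilibrium of \eqref{ump}, proving the theorem.

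The step I expect to be the main obstacle is the concavity check in (b). The compactification (a) is soft, but concavity of $u(P_{gi}+\mathds{1}^\top P_{-gi})P_{gi}-C_{gi}(P_{gi})$ in $P_{gi}$ is a genuine structural property of the inverse aggregate-demand function $u$; because $u$ is only piecewise smooth, the cleanest route is to establish concavity on each smoothness cell of $u$ and then glue these using continuity and monotonicity of $u$, rather than attempting a single second-derivative computation on $\mathbb{R}_{\ge 0}$.
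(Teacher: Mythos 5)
Your route is the same as the paper's: the paper's entire argument for this theorem is the discussion immediately preceding it, namely (i) truncate the strategy space to a box $[0,\bar P_{g1}]\times\cdots\times[0,\bar P_{gn}]$ using $\Pi_{gi}(0,P_{-gi})=-C_{gi}(0)=0$ together with the eventual negativity of the profit guaranteed by property (iii) of Proposition~\ref{prop.price}, (ii) check the requirements of a concave game, and (iii) invoke \cite[Theorem 1]{rosen1965existence}. Your part (a) is a faithful and somewhat more explicit rendering of step (i), correctly observing that the truncation threshold can be taken independent of $P_{-gi}$.

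The step you flag as the obstacle, concavity of $P_{gi}\mapsto u(P_{gi}+\mathds{1}^\top P_{-gi})P_{gi}$, is a genuine gap in your write-up: the justification you offer (the ``monotone, strictly decreasing shape of $u$'') is not sufficient. For smooth $u$ the second derivative of the revenue is $u''(q)P_{gi}+2u'(q)$ with $q=P_{gi}+\mathds{1}^\top P_{-gi}$; the term $2u'(q)$ is negative, but $u''(q)P_{gi}$ can dominate when $u$ is sufficiently convex, and Proposition~\ref{prop.price} only delivers that $u$ is continuous, strictly decreasing and tends to $-\infty$. Likewise, gluing concave pieces across a kink of $u$ preserves concavity only if the one-sided slopes of $u$ \emph{decrease} across the kink, whereas the aggregate $\pi^{-1}$ built from \eqref{pi.j} has slopes that increase at each breakpoint $U_j'(0)$, so the revenue acquires a convex kink there. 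What is really needed is an extra curvature hypothesis on the inverse demand (e.g.\ $u$ concave, or the standard condition $u''(q)q+2u'(q)\le 0$). To be fair, the paper does not close this gap either: it simply asserts that the payoff ``satisfies the properties of a concave game by Assumptions~\ref{assmpt:cost.function}, \ref{a:Uj} and Proposition~\ref{prop.price}'', and concavity of $\Pi_{gi}$ in $P_{gi}$ is in any case written into its definition of the Cournot game; in the linear--quadratic regime actually used from Theorem~\ref{p:optimal} onward, where all consumers are active and $u$ is a single affine piece, the concavity is immediate. If you want your proof to be self-contained, either add concavity of $u$ as a hypothesis or restrict to that affine regime before invoking Rosen.
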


\subsection{Linear-quadratic utility and cost functions}
In this subsection and for the remainder of the paper, we restrict the cost and utility functions of producers and consumers to linear-quadratic functions, namely 
\be\label{e:Cg-LQ}
C_{gi}(P_{gi}) = \frac{1}{2} P_{gi} Q_{gi} P_{gi} + b_{gi} P_{gi}, 
\quad \n{b_{gi},\;} Q_{gi}>0,
\ee
\be\label{e:U-LQ}
U_j(P_{dj})=-\frac{1}{2} P_{dj} Q_{dj} P_{dj}+ b_{dj} P_{dj}, \quad\n{b_{dj, \;}} Q_{dj}>0.
\ee

Proposition \ref{first.order.optimality.condition} is specialized as follows:
\begin{corollary}\label{first.order.optimality.condition.lq}
{The scalar $P_{dj}$  {is a solution to \eqref{ut.max.pbm} with $U_j$ as in \eqref{e:U-LQ}}
\cdp{if and only if}
\be\label{e:optimal.Pd.lq}
P_{dj}=\left\{
\ba{llll}
Q_{dj}^{-1}(b_{dj}-p) & \textrm{if} & p \hspace{-0.25cm}&< b_{dj}\\
0 & \textrm{if} & p  \hspace{-0.25cm}&\ge b_{dj}
\ea
\right.
\ee
}
\end{corollary}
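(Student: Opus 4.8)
The plan is to specialize Proposition~\ref{first.order.optimality.condition} to the quadratic utility \eqref{e:U-LQ} by computing the relevant quantities explicitly. First I would note that $U_j$ as in \eqref{e:U-LQ} satisfies Assumption~\ref{a:Uj}: it is continuously differentiable with $U_j'(P_{dj}) = -Q_{dj} P_{dj} + b_{dj}$, which is strictly decreasing (since $Q_{dj}>0$), hence $U_j$ is strictly concave; moreover $\lim_{P_{dj}\to+\infty} U_j'(P_{dj}) = -\infty$ and $U_j'(0) = b_{dj} > 0$. Therefore Proposition~\ref{first.order.optimality.condition} applies and the solution to \eqref{ut.max.pbm} is $P_{dj} = \pi_j(p)$ with $\pi_j$ given by \eqref{pi.j}.

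Next I would evaluate \eqref{pi.j} in this case. Since $U_j'(0) = b_{dj}$, the threshold in \eqref{pi.j} becomes $p < b_{dj}$ versus $p \ge b_{dj}$. On the branch $p < b_{dj}$ we invert $U_j'$: solving $-Q_{dj} P_{dj} + b_{dj} = p$ gives $P_{dj} = Q_{dj}^{-1}(b_{dj} - p)$, which is exactly $(U_j')^{-1}(p)$. On the branch $p \ge b_{dj}$ we get $P_{dj} = 0$. Substituting into \eqref{e:optimal.Pd} yields precisely \eqref{e:optimal.Pd.lq}, establishing the ``if and only if'' claim as a direct corollary of Proposition~\ref{first.order.optimality.condition}. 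This also matches the expression $\pi_j(\lambda)$ already displayed in \eqref{e:pi-lq} in the Example, so consistency is immediate.

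There is essentially no obstacle here: the statement is a routine specialization, and the only thing worth being slightly careful about is the verification that the quadratic $U_j$ genuinely meets Assumption~\ref{a:Uj} (in particular that $b_{dj}>0$ is what guarantees $U_j'(0)>0$, which is why that sign condition was imposed in \eqref{e:U-LQ}), together with checking that the inverse of the affine map $U_j'$ is computed on the correct domain, namely $\lambda < b_{dj} = U_j'(0)$, so that $(U_j')^{-1}(\lambda)$ lands in $\mathbb{R}_{\ge 0}$. Once these points are noted, the proof is one line invoking Proposition~\ref{first.order.optimality.condition}.
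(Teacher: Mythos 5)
Your proof is correct and is exactly the intended argument: the paper states this corollary without proof, treating it as the immediate specialization of Proposition~\ref{first.order.optimality.condition} (with $U_j'(P_{dj})=-Q_{dj}P_{dj}+b_{dj}$, hence $U_j'(0)=b_{dj}$ and $(U_j')^{-1}(p)=Q_{dj}^{-1}(b_{dj}-p)$) that you carry out. Your verification that \eqref{e:U-LQ} satisfies Assumption~\ref{a:Uj} is the right point to be careful about, and nothing is missing.
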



We are particularly interested in the case where all producers and consumers enter the market, that is $P_{gi}, P_{dj}>0$ for all $i\in I$ and $j\in J$. 
{Conditions under which this case occurs are formalised next.
}
{
\begin{lem}\label{l:equivalence-demand}
Let the  utility functions of  consumers be given by \eqref{e:U-LQ} and consider 
the utility maximization problem \eqref{ut.max.pbm}.
Then, the following statements are equivalent:
\begin{enumerate}
\item There exists $P_d\in \R^n_{>0}$ solution to \eqref{ut.max.pbm}. 
\medskip{}

\item $p<\underline{b}_d$, where $\underline{b}_d:=\min\{b_{dj}:j\in J\}$.

\medskip{}

\item The vector $P_d$ given by 
\be\label{positive.demand}
P_d=Q_d^{-1}(b-\ones p),
\ee
is {the unique} solution to \eqref{ut.max.pbm}, and $p\neq \underline{b}_d$. 
\end{enumerate}

\end{lem}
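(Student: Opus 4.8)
The plan is to prove the three-way equivalence by running the cycle $(1)\Rightarrow(2)\Rightarrow(3)\Rightarrow(1)$. The only tool needed is Corollary \ref{first.order.optimality.condition.lq}: since the objective in \eqref{ut.max.pbm} is separable across $j$, that corollary characterizes, uniquely, each component of a solution, namely $P_{dj}=Q_{dj}^{-1}(b_{dj}-p)$ when $p<b_{dj}$ and $P_{dj}=0$ when $p\ge b_{dj}$ (the uniqueness reflecting strict concavity of $\Pi_{dj}(\cdot,p)$, since $Q_{dj}>0$). Throughout, $Q_d:=\diag(Q_{d1},\ldots,Q_{dm})$ and $b:=\col(b_{d1},\ldots,b_{dm})$.

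For $(1)\Rightarrow(2)$: if a solution $P_d$ has all entries strictly positive, then for each $j$ the component $P_{dj}>0$ excludes the value $0$ that the corollary forces whenever $p\ge b_{dj}$, so $p<b_{dj}$ for every $j$, i.e.\ $p<\underline{b}_d$. For $(2)\Rightarrow(3)$: if $p<\underline{b}_d$ then $p<b_{dj}$ for all $j$, so the corollary gives the unique componentwise solution $P_{dj}=Q_{dj}^{-1}(b_{dj}-p)>0$, which stacked over $j$ is exactly $P_d=Q_d^{-1}(b-\ones p)$; this vector is therefore the unique solution, and $p<\underline{b}_d$ trivially gives $p\ne\underline{b}_d$. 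For $(3)\Rightarrow(1)$: if $P_d=Q_d^{-1}(b-\ones p)$ is the solution and $p\ne\underline{b}_d$, then comparing this formula with the corollary componentwise shows that $p>b_{dj}$ is impossible for any $j$ --- the corollary would force a zero component exactly where the formula returns the strictly negative $Q_{dj}^{-1}(b_{dj}-p)$ --- hence $p\le\underline{b}_d$, and with $p\ne\underline{b}_d$ we obtain $p<\underline{b}_d$; then every entry $Q_{dj}^{-1}(b_{dj}-p)$ is strictly positive, which is statement (1).

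The only genuinely delicate point is the role of the side condition $p\ne\underline{b}_d$ in (3): at $p=\underline{b}_d$ the vector $Q_d^{-1}(b-\ones p)$ is still the unique solution of \eqref{ut.max.pbm}, but it has a vanishing entry at the index attaining $\min_j b_{dj}$, so (1) would fail. Excluding $p=\underline{b}_d$ is precisely what upgrades the weak inequality $p\le\underline{b}_d$ --- all that the corollary by itself yields in the $(3)\Rightarrow(1)$ step --- to the strict inequality $p<\underline{b}_d$ needed for strict positivity. Every other step is a direct reading of Corollary \ref{first.order.optimality.condition.lq}, so no lengthy computation is involved.
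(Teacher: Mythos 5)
Your proposal is correct and uses the same single tool as the paper's proof, namely Corollary \ref{first.order.optimality.condition.lq} applied componentwise; the only difference is cosmetic (you close the cycle $(1)\Rightarrow(2)\Rightarrow(3)\Rightarrow(1)$, while the paper shows $1)\Leftrightarrow 2)$, then $1),2)\Rightarrow 3)$, then $3)\Rightarrow 2)$), and your closing remark on the role of $p\neq\underline{b}_d$ matches exactly how the paper upgrades $p\le b_{dj}$ to a strict inequality.
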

}
%
%

\begin{proof}
See Appendix.
\end{proof}

\medskip
\cdpter{We note that, given the strictly positive demand \eqref{positive.demand} in Lemma \ref{l:equivalence-demand}, the expression can be inverted to obtain}
{

\be\label{e:price-affine*}
p= \beta^* -\alpha^* \ones^\top P_d
\ee
where
\be\label{e:alpha-beta}
 {\beta^*} := \dst\frac{\dst\sum_{j\in J} \frac{b_{di}}{Q_{di}}}{\dst\sum_{j\in J} \frac{1}{Q_{di}}}=\frac{\ones^\top Q_d^{-1} b_d}{\ones^\top Q_d^{-1}\ones }, \quad {\alpha^*}:= \dst\frac{1}{\dst\sum_{j\in J} \frac{1}{Q_{di}}}=\dst\frac{1}{\ones^\top Q_d^{-1}\ones}, 
\ee
and
$
b_d := {\rm col}(b_{d1}, \ldots, b_{dn}), \quad Q_d ={\rm diag} (Q_{d1}, \ldots, Q_{dn}),
$
$
b_g := {\rm col}(b_{g1}, \ldots, b_{gn}), \quad Q_g ={\rm diag} (Q_{g1}, \ldots, Q_{gn}).$



\vspace*{-0.3cm}
\[
{\color{white}***}
\]

We turn now our attention to the producers. Let the price function \cdpter{in \eqref{ump}} admit the affine form  
\be\label{e:price-affine}
p(P_g)=\beta- \alpha \ones^\top P_g
\ee
for some scalars $\alpha, \beta>0$, which is motivated by Proposition \ref{prop.price} specialized to the case of linear-quadratic cost functions with strictly positive generation and demand.
In the next section, the scalars $\alpha$ and $\beta$ in \eqref{e:price-affine}  will be set to those in \eqref{e:alpha-beta}, as we are interested in the supply-demand matching condition \eqref{balance0}. This will be made more explicit in Theorem \ref{p:optimal}.

Since Assumptions \ref{assmpt:cost.function}  and \ref{a:Uj} are satisfied in the case of linear-quadratic functions, Theorem \ref{thm:Nash-rosen1965existence} holds and  a Cournot-Nash equilibrium exists. Then,
the computation of the Cournot-Nash equilibrium $P_g^\star$ descends from the optimization problem 
\be\label{game.lq}
P_{gi}^\star \in 
\dst\arg\max_{P_{gi}\ge 0} \Pi_{gi}(P_{gi}, P_{-gi}^\star)
\ee
where, in view of \eqref{eq:profit}, \eqref{e:Cg-LQ}, \eqref{e:price-affine}, 
\[\ba{rl}
\Pi_{gi}(P_{gi}, P_{-gi}) =& (\beta -\alpha  P_{gi} -\alpha \mathds{1}^\top P_{-gi})P_{gi}\\
&-\frac{1}{2} P_{gi} Q_{gi} P_{gi} - b_{gi} P_{gi}.
\ea\]
{The conditions} under which the parabola 
$(\beta -\alpha  P_{gi} -\alpha \mathds{1}^\top P_{-gi})P_{gi}
-\frac{1}{2} P_{gi} Q_{gi} P_{gi} - b_{gi} P_{gi}$ has a {nonnegative} maximizer 
can be formalized as follows:
\begin{proposition}\label{first.order.optimality.condition.game.lq}
$P_{gi}$ is a solution to  \eqref{game.lq} if and only if
\be\label{e:optimal.Pg}
P_{gi}=\left\{
\ba{rcl}
\frac{\gamma_i(P_{-gi})}{2\alpha +Q_{gi}} & \textrm{if} & \gamma_i(P_{-gi})>0\\
0 & \textrm{if} & \gamma_i(P_{-gi}) \le 0
\ea
\right.
\ee
with 
\[
\gamma_i(P_{-gi})
: = \beta-b_{gi} - \alpha \sum_{j\ne i:P_{gj}>0} P_{gj}.
\]
\end{proposition}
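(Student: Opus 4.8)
The plan is to treat \eqref{game.lq} as a single-variable concave maximization over $P_{gi}\in\mathbb{R}_{\ge 0}$, with $P_{-gi}$ held fixed at the equilibrium value, and to characterize its solution via first-order KKT conditions. First I would write the objective explicitly as a function of the scalar $P_{gi}$ only:
\be\label{plan:obj}
\phi_i(P_{gi}) := (\beta -\alpha P_{gi} -\alpha\mathds{1}^\top P_{-gi})P_{gi} - \tfrac{1}{2}P_{gi}Q_{gi}P_{gi} - b_{gi}P_{gi},
\ee
which is a downward parabola in $P_{gi}$ since the coefficient of $P_{gi}^2$ is $-(\alpha + \tfrac12 Q_{gi}) < 0$ under $\alpha, Q_{gi} > 0$. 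Hence $\phi_i$ is strictly concave and continuously differentiable, so $P_{gi}$ maximizes $\phi_i$ over $\mathbb{R}_{\ge 0}$ if and only if it satisfies the KKT stationarity/complementarity conditions: either $P_{gi}=0$ and $\phi_i'(0)\le 0$, or $P_{gi}>0$ and $\phi_i'(P_{gi})=0$.

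Next I would compute $\phi_i'(P_{gi}) = \beta - 2\alpha P_{gi} - \alpha\mathds{1}^\top P_{-gi} - Q_{gi}P_{gi} - b_{gi}$. Setting this to zero gives the interior maximizer $P_{gi} = (\beta - b_{gi} - \alpha\mathds{1}^\top P_{-gi})/(2\alpha + Q_{gi})$, which is strictly positive precisely when the numerator is positive; otherwise the unconstrained vertex lies at a nonpositive value and the constrained maximizer is $P_{gi}=0$ (here I use $\phi_i'(0) = \beta - b_{gi} - \alpha\mathds{1}^\top P_{-gi}$, so $\phi_i'(0)\le 0$ is exactly the complementary case). The one remaining point is to identify $\mathds{1}^\top P_{-gi}$ with $\sum_{j\ne i:\,P_{gj}>0}P_{gj}$: since $P_{-gi}\in\mathbb{R}^{n-1}_{\ge 0}$, the sum over all $j\ne i$ equals the sum restricted to indices with $P_{gj}>0$, because the zero components contribute nothing. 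Substituting this identification into the numerator yields $\gamma_i(P_{-gi}) = \beta - b_{gi} - \alpha\sum_{j\ne i:\,P_{gj}>0}P_{gj}$ and reproduces exactly \eqref{e:optimal.Pg}.

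I do not expect a serious obstacle here; the result is essentially the projection of a parabola's vertex onto $\mathbb{R}_{\ge 0}$. The only point requiring minor care is the "if and only if": concavity of $\phi_i$ makes the KKT conditions both necessary and sufficient, so one must invoke strict concavity (guaranteed by $\alpha + \tfrac12 Q_{gi} > 0$) rather than argue only in one direction. A secondary subtlety worth a sentence is that the restriction from $P_{gi}\ge 0$ unbounded to the bounded interval $[0,\bar P_{gi}]$ used earlier in the Cournot-game discussion is harmless, since for this quadratic the maximizer over $\mathbb{R}_{\ge 0}$ and over $[0,\bar P_{gi}]$ coincide whenever $\bar P_{gi}$ is chosen large enough, as already established before Theorem \ref{thm:Nash-rosen1965existence}.
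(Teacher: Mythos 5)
Your proof is correct and is exactly the argument the paper has in mind: the paper omits the proof of this proposition, but the sentence immediately preceding it (``the conditions under which the parabola \dots has a nonnegative maximizer'') signals precisely your KKT/vertex-projection reasoning for a strictly concave quadratic on $\mathbb{R}_{\ge 0}$. Your identification of $\mathds{1}^\top P_{-gi}$ with $\sum_{j\ne i:\,P_{gj}>0}P_{gj}$ (valid because the strategy set is $\mathbb{R}^n_{\ge 0}$) closes the only notational gap, and the observation that strict concavity makes the first-order conditions both necessary and sufficient correctly delivers the ``if and only if.''
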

The proof 
is omitted.

%

Recall that we are interested in the case where every
producer contributes a strictly positive production, i.e. $P_{g}\in \R^n_{>0}$. This brings us to the following lemma:
\begin{lem}\label{cor:opt.Pg}
The following statements are equivalent:
\begin{enumerate}
{\item 
\cdpter{There exists $P_g\in \R^n_{>0}$ solution to  \eqref{game.lq}.}
}
\medskip{}
\item The vector $P_g$ given by 
\be\label{Pg.opt}
P_g= (\alpha (I+\mathds{1}  \mathds{1}^\top)  +Q_g )^{-1}(\beta \ones-b_g), 
\ee  
{is \cdpter{the unique} solution to  \eqref{game.lq}} and \cdpter{$\cdpter\beta-\alpha \ones^\top P_g {\neq} \overline b_g$}, where $\overline{b}_g:=\max_{i\in I} b_{gi}$.
\medskip{}
\item $(\alpha (I+\mathds{1}  \mathds{1}^\top)  +Q_g )^{-1}(\beta \ones-b_g)\in \mathbb{R}^n_{>0}$.
\medskip{}
\item {The inequality}
\be\label{e:upperbound}
\ones^\top (\alpha (I+\mathds{1}  \mathds{1}^\top)  +Q_g )^{-1}(\beta \ones-b_g) < {\frac{\beta-\overline b_g}{\alpha}},
\ee
holds.
\end{enumerate}
\end{lem}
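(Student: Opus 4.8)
The strategy is to prove the cyclic chain of implications $(1)\Rightarrow(2)\Rightarrow(3)\Rightarrow(4)\Rightarrow(1)$, though in practice the cleanest route is probably to first nail down the candidate vector in~\eqref{Pg.opt} and then argue equivalences around it. First I would observe, via Proposition~\ref{first.order.optimality.condition.game.lq}, that if $P_g\in\R^n_{>0}$ solves~\eqref{game.lq}, then for \emph{every} $i$ we have $\gamma_i(P_{-gi})>0$ and $P_{gi}=\gamma_i(P_{-gi})/(2\alpha+Q_{gi})$, where now the sum in $\gamma_i$ ranges over \emph{all} $j\neq i$ since every $P_{gj}>0$. Writing these $n$ equations simultaneously gives $(2\alpha+Q_{gi})P_{gi}=\beta-b_{gi}-\alpha\sum_{j\neq i}P_{gj}=\beta-b_{gi}-\alpha(\ones^\top P_g - P_{gi})$, i.e. $(\alpha + Q_{gi})P_{gi} + \alpha\ones^\top P_g = \beta - b_{gi}$, which in vector form is exactly $(\alpha(I+\ones\ones^\top)+Q_g)P_g=\beta\ones-b_g$. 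Since $\alpha(I+\ones\ones^\top)+Q_g$ is symmetric positive definite (it is $Q_g\succ0$ plus a positive multiple of $I$ plus a rank-one PSD term), it is invertible, so $P_g$ must equal the vector in~\eqref{Pg.opt}; this gives uniqueness and the first half of $(1)\Rightarrow(2)$, and the reverse direction $(3)\Rightarrow(1)$ follows by checking that this vector, being strictly positive, makes each $\gamma_i>0$ and hence satisfies the first branch of~\eqref{e:optimal.Pg}.

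For the condition $\beta-\alpha\ones^\top P_g\neq\overline b_g$ in statement~(2): at the candidate point, $\gamma_i(P_{-gi})=\beta-b_{gi}-\alpha(\ones^\top P_g-P_{gi})=(2\alpha+Q_{gi})P_{gi}>0$ for the index $i$ achieving $\overline b_g=b_{gi}$; combined with $P_{gi}>0$ this yields $\beta-\alpha\ones^\top P_g = b_{gi}+(\alpha+Q_{gi})P_{gi} > \overline b_g$, so the inequality (indeed a strict one) holds automatically. Conversely, for $(2)\Rightarrow(3)$ and $(3)\Leftrightarrow(4)$: statement~(3) is literally the assertion that~\eqref{Pg.opt} lies in $\R^n_{>0}$, so $(2)\Rightarrow(3)$ is immediate, and I would prove $(3)\Leftrightarrow(4)$ by the computation just sketched run in reverse — namely, that $\ones^\top P_g < (\beta-\overline b_g)/\alpha$ is equivalent to $\beta - \alpha\ones^\top P_g > \overline b_g$, which (given the identity $(\alpha+Q_{gi})P_{gi}=\beta-b_{gi}-\alpha\ones^\top P_g$ valid componentwise for the vector~\eqref{Pg.opt}) forces $(\alpha+Q_{gi})P_{gi} = (\beta-\alpha\ones^\top P_g)-b_{gi} \geq (\beta-\alpha\ones^\top P_g)-\overline b_g > 0$, hence $P_{gi}>0$ for all $i$; the other direction takes the minimum over $i$ of the same identity.

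The one subtle point — and the main obstacle — is the direction $(4)\Rightarrow$ (the rest): here $\ones^\top P_g$ appears on the \emph{left} of the inequality in~\eqref{e:upperbound}, where $P_g$ is the fixed vector~\eqref{Pg.opt}, so the inequality is a genuine (verifiable) condition on the data, not a circular one; I need to make sure the chain $\beta-\alpha\ones^\top P_g>\overline b_g \Rightarrow P_g\in\R^n_{>0}$ is airtight, which it is precisely because the vector~\eqref{Pg.opt} satisfies the linear system $(\alpha(I+\ones\ones^\top)+Q_g)P_g=\beta\ones-b_g$ identically, giving the componentwise identity $(\alpha+Q_{gi})P_{gi}=\beta-\alpha\ones^\top P_g-b_{gi}$ regardless of sign. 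I would also double-check one boundary issue: Proposition~\ref{first.order.optimality.condition.game.lq} defines $\gamma_i$ with the sum restricted to $\{j\neq i:P_{gj}>0\}$, and the argument needs all components strictly positive so that this restriction is vacuous — this is exactly why statement~(1) insists on $P_g\in\R^n_{>0}$ rather than merely $P_g\geq0$, and I would flag that the equivalences genuinely concern the all-producers-active regime. The remaining steps are the routine linear-algebra verifications indicated above.
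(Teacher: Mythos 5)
Your route is essentially the paper's: you use Proposition~\ref{first.order.optimality.condition.game.lq} to turn strict positivity of a solution into the linear system $(\alpha(I+\ones\ones^\top)+Q_g)P_g=\beta\ones-b_g$, invert (the matrix is positive definite, which also yields uniqueness), and then exploit the componentwise identity $(\alpha+Q_{gi})P_{gi}=\beta-\alpha\ones^\top P_g-b_{gi}$ to trade positivity of the components against the scalar inequality $\beta-\alpha\ones^\top P_g>\overline b_g$, i.e.\ \eqref{e:upperbound}. Your treatment of $(1)\Rightarrow(2)$, $(3)\Rightarrow(1)$ and $(3)\Leftrightarrow(4)$ matches the appendix proof step for step.

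The one genuine gap is $(2)\Rightarrow(3)$, which you declare ``immediate.'' It is not: statement (2) only asserts that the vector \eqref{Pg.opt} is the unique solution to \eqref{game.lq}, and a solution of \eqref{game.lq} is merely constrained to lie in $\R^n_{\ge 0}$, not in $\R^n_{>0}$. Indeed, if the data happen to satisfy $\beta-\alpha\ones^\top P_g=\overline b_g$ for $P_g$ as in \eqref{Pg.opt}, then the component attaining $\overline b_g$ vanishes (its $\gamma_i$ in \eqref{e:optimal.Pg} equals zero) while the vector is still a legitimate solution of \eqref{game.lq}; so without the side condition $\beta-\alpha\ones^\top P_g\neq\overline b_g$ the implication would be false, and that condition sits in statement (2) precisely to exclude this boundary case --- yet you never invoke it in this direction. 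The repair is short and is what the paper does: assume without loss of generality $b_{g1}=\overline b_g$ and suppose $\gamma_1(P_{-g1})\le 0$; then $P_{g1}=0$, and the identity $(\alpha+Q_{g1})P_{g1}=\beta-\alpha\ones^\top P_g-b_{g1}$ forces $\beta-\alpha\ones^\top P_g=\overline b_g$, contradicting the assumed inequality $\beta-\alpha\ones^\top P_g\neq\overline b_g$. Hence $\gamma_1>0$, so $\beta-\alpha\ones^\top P_g>\overline b_g$, and the same componentwise identity then gives $P_{gi}>0$ for every $i$. With this repair your argument is complete.
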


\begin{proof}
See Appendix.
\end{proof}

Summarizing, Corollary \ref{first.order.optimality.condition.lq} and Proposition \ref{first.order.optimality.condition.game.lq} identify the optimal production and optimal demand, respectively, for the cost and utility functions \eqref{e:Cg-LQ} and \eqref{e:U-LQ}. Lemma \ref{l:equivalence-demand} characterizes the conditions under which the optimal demand of each consumer is strictly positive, and Lemma \ref{cor:opt.Pg} 
 provides equivalent conditions for strict positivity of optimal productions. 
{Based on the aforementioned results, the following necessary and sufficient condition for the 
existence of an optimal triple $(P_g^\star, P_d^\star, p^\star)\in \R^n_{>0}\times \R^m_{>0}\times \R_{>0}$ can be given:}

\bigskip{}
{
\begin{theorem}\label{p:optimal}
Let the price function admit the affine form $p(P_g)=\beta-\alpha\ones^\top P_g$, for some positive scalars $\alpha, \beta>0$.
For linear-quadratic functions \eqref{e:Cg-LQ}, \eqref{e:U-LQ}, 
let $P_g^\star$ denote the Cournot-Nash equilibrium solution to \eqref{game.lq}, and $P_d^\star$ the optimal demand solution to  \eqref{ut.max.pbm} computed with respect to the optimal price $p^\star:=p(P_g^\star)$. Then, the following are equivalent:

\begin{enumerate}
\item \be\label{suff.cond}
\frac{\beta-\underline{b}_d}{\alpha} {<} \ones^\top (\alpha (I+\mathds{1}  \mathds{1}^\top)  +Q_g )^{-1}(\beta \ones-b_g) {<} {\frac{\beta-\overline b_g}{\alpha}}.
\ee
\item 
\bse\label{e:opt-explicit}
\begin{align}
\label{e:Pg-opt}
P_g^\star&=(\alpha (I+\mathds{1}  \mathds{1}^\top)  +Q_g )^{-1}(\beta \ones-b_g)\\
\label{e:Pd-opt}
P_d^\star&={Q}_d^{-1} ({b}_d - \beta \ones +\alpha \ones  \ones^\top P^\star_g)\\
\label{e:p*}
p^\star&=\beta-\alpha \ones^\top P^\star_g,
\end{align}
\ese
and $\overline b_g\neq p^\star\neq \underline{b}_d.$ 
\medskip{}
\item $(P_g^\star, P_d^\star, p^\star)\in \mathbb{R}^n_{>0}\times \mathbb{R}^m_{>0}\times \mathbb{R}_{>0}$.
\end{enumerate}
\medskip{}
Moreover, in case $\alpha=\alpha^*$ and $\beta=\beta^*$ with $\alpha^*$ and $\beta^*$ as in \eqref{e:alpha-beta}, then the balancing condition holds, namely
\be\label{e:balanced-price}
\ones^\top P_g^\star = \ones^\top P_d^\star.
\ee
\end{theorem}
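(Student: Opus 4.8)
The plan is to obtain the equivalences (1)$\,\Leftrightarrow\,$(2)$\,\Leftrightarrow\,$(3) by stitching together Lemma~\ref{cor:opt.Pg}, which characterizes strict positivity of the Cournot--Nash production, with Lemma~\ref{l:equivalence-demand}, which characterizes strict positivity of the optimal demand; the final balancing identity~\eqref{e:balanced-price} then drops out of a one-line substitution using the expressions for $\alpha^*,\beta^*$ in~\eqref{e:alpha-beta}.

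First I would record the identity $p^\star=p(P_g^\star)=\beta-\alpha\ones^\top P_g^\star$, which holds by definition since $p(\cdot)$ is affine; this is exactly~\eqref{e:p*}, and it is available unconditionally. Existence of $P_g^\star$ is guaranteed by Theorem~\ref{thm:Nash-rosen1965existence}. Now the right-hand inequality of~\eqref{suff.cond} is, verbatim, condition~4 of Lemma~\ref{cor:opt.Pg}; hence it is equivalent to $P_g^\star$ being given by~\eqref{e:Pg-opt} together with $p^\star\neq\overline b_g$, and also to $P_g^\star\in\R^n_{>0}$. Moreover, once~\eqref{e:Pg-opt} holds, rearranging that same inequality (using $\alpha>0$ and $b_{gi}>0$) yields $p^\star=\beta-\alpha\ones^\top P_g^\star>\overline b_g>0$, which already disposes of the strict positivity of $p^\star$ needed in~(3). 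Substituting~\eqref{e:Pg-opt} into the left-hand inequality of~\eqref{suff.cond} and again using $\alpha>0$, it rearranges to $\beta-\alpha\ones^\top P_g^\star<\underline b_d$, i.e. $p^\star<\underline b_d$. By Lemma~\ref{l:equivalence-demand} applied at the price $p=p^\star$, this last inequality is equivalent to $P_d^\star$ being given by~\eqref{positive.demand}, which after inserting $p^\star=\beta-\alpha\ones^\top P_g^\star$ is precisely~\eqref{e:Pd-opt}, together with $p^\star\neq\underline b_d$, and also to $P_d^\star\in\R^m_{>0}$. Assembling the two chains gives (1)$\,\Leftrightarrow\,$(2) and (1)$\,\Leftrightarrow\,$(3). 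For the directions (2)$\,\Rightarrow\,$(1) and (3)$\,\Rightarrow\,$(1) I would use that $P_g^\star$ is the unique solution of~\eqref{game.lq} (by Lemma~\ref{cor:opt.Pg} once we are inside its hypotheses) and $P_d^\star$ the unique solution of~\eqref{ut.max.pbm} (strict concavity of each $U_j$), so that from the positivity, or from the closed forms, the ``only if'' parts of the two lemmas return all the remaining items.

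For the balancing claim I would assume one (hence all) of (1)--(3), so that $P_d^\star$ is given by~\eqref{e:Pd-opt}, and compute
\[
\ones^\top P_d^\star=\ones^\top Q_d^{-1}b_d-\beta\,\ones^\top Q_d^{-1}\ones+\alpha\,(\ones^\top Q_d^{-1}\ones)\,\ones^\top P_g^\star .
\]
With $\beta=\beta^*$ the first two terms cancel, since $\beta^*=(\ones^\top Q_d^{-1}b_d)/(\ones^\top Q_d^{-1}\ones)$, and with $\alpha=\alpha^*$ one has $\alpha^*(\ones^\top Q_d^{-1}\ones)=1$; hence $\ones^\top P_d^\star=\ones^\top P_g^\star$, which is~\eqref{e:balanced-price}.

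I expect the main obstacle to be bookkeeping rather than a genuine difficulty: one must carefully keep $P_g^\star$ and $P_d^\star$ meaning the (generically unique) optimizers, invoke each of the two lemmas in the correct direction, and make sure the manipulations comparing $p^\star$ with $\underline b_d$ and with $\overline b_g$ preserve strictness, so that the ``$\neq$'' conditions in~(2) and the open-set membership in~(3) come out exactly as written. The only slightly non-routine point is that a producer at zero production would force $p^\star=\overline b_g$, so that the hypothesis $p^\star\neq\overline b_g$ in~(2) in fact upgrades to $P_g^\star\in\R^n_{>0}$; but this is precisely the content of Lemma~\ref{cor:opt.Pg} and need not be re-derived here.
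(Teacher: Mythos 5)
Your proposal is correct and follows essentially the same route as the paper: the paper's proof likewise dispatches the three-way equivalence by citing Lemma~\ref{l:equivalence-demand} and Lemma~\ref{cor:opt.Pg} (calling it ``straightforward''), and establishes~\eqref{e:balanced-price} by exactly your computation of $\ones^\top P_d^\star$ together with the definitions of $\alpha^*,\beta^*$ in~\eqref{e:alpha-beta}. You simply spell out more of the bookkeeping (matching the right inequality of~\eqref{suff.cond} to condition~4 of Lemma~\ref{cor:opt.Pg}, translating the left inequality into $p^\star<\underline b_d$, and noting $p^\star>\overline b_g>0$) than the paper does.
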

\begin{proof}
By Lemma \ref{l:equivalence-demand} and Lemma \ref{cor:opt.Pg}, establishing the equivalence of the three statements is straightforward.
{Now suppose that $\alpha=\alpha^*$ and $\beta=\beta^*$. Then equation \eqref{e:Pd-opt} becomes
\[
P_d^\star={Q}_d^{-1} ({b}_d - \beta^* \ones +\alpha^* \ones  \ones^\top P^\star_g).
\]
This yields 
\[
\ones^\top P_d^\star= \ones^\top Q_d^{-1}b_d-\beta^* \ones^\top Q_d^{-1}\ones +\alpha^*  \ones^\top Q_d^{-1}\ones\ones^T P_g^\star.
\]
The balancing condition \eqref{e:balanced-price} then follows from \eqref{e:alpha-beta}.}
\end{proof} 
}
\begin{remark}\label{r:equivalence}
{
\nmo{The vector}
$P_g^\star$ {in \eqref{e:Pg-opt}} can be rewritten as
\be\label{straight.manip}\ba{rl}
P_g^\star&=(\alpha I+Q_g )^{-1}(\beta \ones-b_g-\alpha \ones\ones^\top   P_g^\star)\\
&=(\alpha I+Q_g )^{-1}(\ones p^\star-b_g).
\ea\ee
Hence,}
the triple $(P_g^\star, P_d^\star, p^\star)$ can be equivalently characterized by the implicit form
\bse\label{e:opt-implicit}
\begin{align}
P_g^\star&= (\alpha I+Q_{g})^{-1} (\ones p^\star- b_{g}),\\
\label{e:demand-implicit}
P_d^\star&={Q}_d^{-1} ({b}_d - \ones p^\star),\\
p^\star&= \beta-\alpha  \ones^\top P^\star_g.
\end{align}
\ese
\end{remark}

\begin{remark}\label{r:interpret}
To  give an interpretation to condition \eqref{suff.cond}, we rewrite it in a different form. 
By Theorem \ref{p:optimal}, condition \eqref{suff.cond} can be rewritten as 
\be\label{suff.cond.2}
\frac{\beta-\underline{b}_d}{\alpha} <\ones^\top P_g^\star < {\frac{\beta-\overline b_g}{\alpha}},
\ee
which is equivalent to $\overline{b}_g < \beta - \alpha \ones^\top {P_g^\star}< \underline{b}_d$, or also
\[
C'_{i}(0) < p(P_g^\star) < U'_{i}(0), \quad \forall i\in \mathcal{I}. 
\]
Noting that $p(P_g)$ is monotonically decreasing, the lower bound yields $C'_{i}(0) < p(0)$.
This means that
the marginal costs of the producers are lower than the price at zero generation ($P_{gi}=0$). 
Therefore, the producers always benefit from providing nonzero amount of goods to the consumers. 
Analogously, the upper bound indicates that the marginal utility of each consumer at zero demand ($P_{dj}=0$) is higher than the 
the eventual optimal price $p^\star=p(P_g^\star)$ dictated by the consumers.
Hence, \cdpter{under condition \eqref{suff.cond},} it is always advantageous for consumers to enter the market and have a strictly positive demand.
\end{remark}

\section{Cournot-Nash optimal dynamical networks}\label{sec:3}

In the previous section, we studied Cournot competition and characterized the Cournot-Nash equilibrium \cdp{among}
producers and consumers.
In this section, {we introduce a dynamical network whose output variables are affected by the cumulative effect of demand and generation mismatch. Using these variables as measurements, we}
devise a dynamic 
output feedback algorithm
that steers the dynamical network to the Cournot-Nash optimal solution identified by the triple $(P_g^\star, P_d^\star, p^\star)$ {with
\bse\label{e:opt-star-version}
\begin{align}
\label{e:Pg-opt-star}
P_g^\star&=(\alpha^* (I+\mathds{1}  \mathds{1}^\top)  +Q_g )^{-1}(\beta^* \ones-b_g)\\
\label{e:Pd-opt-star}
P_d^\star&={Q}_d^{-1} ({b}_d - \beta^* \ones +\alpha^* \ones  \ones^\top P^\star_g)\\
\label{e:p*-star}
p^\star&=\beta^*-\alpha^* \ones^\top P^\star_g.
\end{align}
\ese
Note that the triple above is obtained from \eqref{e:opt-implicit} by setting $\alpha=\alpha^*$ and $\beta=\beta^*$. 
The reason why we are interested in the latter choice is to ensure that the balancing condition \eqref{e:balanced-price} is met.}
Moreover, we require 
\cdp{a reduced amount of}
information \cdp{about} the consumers to allow for the changing demand present in dynamic interactive markets.   
Finally, note that the feedback algorithm should be designed such that the stability of the physical system is not compromised.
\subsection{Network dynamics}
The topology of the network is represented by a connected and undirected graph $\calG(\calV,\calE)$ with a vertex set $\calV=\{1, \ldots, n\}$, and an edge set $\calE$ given by the set of unordered pairs $\{i, j\}$ of distinct vertices $i$ and $j$.  The cardinality of $\calE$ is denoted by $m$\footnote{The integer $m$ should not be confused with the number of consumers in the previous section, as the latter is equal to the number of producers in this  section and is denoted by $n$.}. 
The set of neighbors of node $i$ is denoted by $\calN_i=\{j\in \calV \mid {\{i,j\}\in \calE}\}.$
%

We 
consider 
a second-order consensus based network dynamics of the form:
\begin{align}\label{e:2order}
m_i\ddot x_{i}+d_i\dot x_i \cdp{-} \sum_{j\in \calN_i} \nabla H_{ij} (x_{i}-x_{j})=u_i, 
\end{align}
where 
$m_i,d_i\in \R_{>0}$ 
are constant, $H_{ij}: \R \rightarrow \R$ is a continuously differentiable strictly convex\footnote{\nm{\cdpter{Locally} strictly convex functions can be analogously treated in the analysis, with the only difference that the convergence result will become local in this case.}}with its minimum at the origin, 
$\nabla H_{ij}$ denotes the partial derivative of $H_{ij}$ with respect to its argument $x_i-x_j$,
$x_i\in \R$ is the state associated to node $i$, and $u_i\in \R$ is  the input applied to the dynamics of the $i$th node. Note that as $H_{ij}$ is strictly convex, $\nabla H_{ij}$ 
is a strictly increasing function of $x_i-x_j$. 
The second order dynamics \eqref{e:2order} can represent 
\cdp{different kind of networks, including}
power networks, by appropriately choosing the function $H_i$, see e.g. \cite{van2013port,kundur94}. 
Producers and consumers affect the dynamics \eqref{e:2order} via
\[
u_i=P_{gi}-P_{di},
\]
where $P_{gi}$ is the production and $P_{di}$ is the aggregated demand at node $i$ as before.
This means that a mismatch between production and demand causes the 
\cdp{node $i$}'s state variable to drift away from its unforced behavior. 
Note that the number of producers and consumers here are considered to be the same, and we thus use the notation $P_{di}$ rather than $P_{dj}$ which was used in the previous section.

%

Let $R$ be the incidence matrix of the graph $\calG$. Note that, by associating an arbitrary orientation to the edges, the incidence
matrix $R\in\R^{n \times m}$ is defined element-wise as $R_{ik} = 1,$ if node
$i$ is the sink of 
\cdp{edge $k$,}
$d_{ik} = -1,$ if $i$ is the source of 
\cdp{edge $k$,}
and $R_{ik} = 0$ otherwise. In addition, $\ker R^\top=\im \ones$ for a \cdp{connected} graph $\calG$.
Then, \eqref{e:2order} can be written in vector form as
\bse\label{e:2order-compact}
\begin{align}
\dot x&= y\\
M\dot y&=-D y - R\, \nabla H(R^\top x) + P_g- P_d
\end{align}
\ese
where $x=\col(x_i)$, $M=\bdiag(M_i)$, $D=\bdiag(D_i)$, $P_g=\col(P_{gi})$, $P_d=\col(P_{di})$, $i\in \calV$. In addition, $\col(\cdp{\nabla H_{ij}})$ is denoted by 
$\nabla H: \R^m \rightarrow \R^m$, where the edge ordering in $\nabla H$ is the same as that of the incidence matrix $R$.
It is easy to see that \eqref{e:2order-compact} has non-isolated equilibria for constant vectors $P_g$ and $P_d$. In fact, 
given a solution $(x,y)$ of \eqref{e:2order-compact}, $(x+c\ones, y)$ is a solution to \eqref{e:2order-compact} as well, for any constant $c\in \R$.
To avoid this complication, we perform a change of coordinates by defining  

\be\label{varphi}
\zeta_i = x_i -x_n, \quad i=1,\ldots, n-1.
\ee


\cdp{This new set of coordinates} can be also written as
\[
\begin{bmatrix}
\zeta_1\\
 \vdots \\ \zeta_{n-1} \\
 0
\end{bmatrix}=
\begin{bmatrix}
x_1 \\ 
\vdots \\ x_{n-1}\\ x_n
\end{bmatrix}-\mathds{1} x_n.
\]
Let $R_\zeta\in \R^{n-1} \times \R^m$ denote the incidence matrix with its $n$-th row removed. Then, by the equality above we have 
$$R^\top x=R_\zeta^\top \zeta,$$
where $\col(\zeta_i)=\zeta \in \R^{n-1}$.  
Moreover, it holds that $\zeta= E^\top x$ where $E^\top=\bbm I_{n-1} & -\ones_{n-1} \ebm$.
Noting that $R=ER_\zeta$, and defining a function $H_\zeta$ such that $H(R_\zeta^\top\zeta)=H_\zeta(\zeta)$, and thus 
$R_\zeta \nabla H(R_\zeta^\top\zeta)= \nabla H_\zeta(\zeta)$,
the system \eqref{e:2order-compact} in the new coordinates reads as 
%
\bse\label{e:delta-compact}
\begin{align}
\dot\zeta &=E^\top y\\
M \dot y&=-Dy -E \,\nabla H_\zeta (\zeta)+P_g-P_d,
\end{align}
\ese

\begin{remark}
\cdpter{
Note that the system above belongs to the class of dynamical networks  given by the feedback interconnection of incrementally output feedback passive dynamics at the nodes of the form  \cite{arcak}} 
\nmo{
\be\label{e:inc-passive-1}
\ba{rl}
\dot y&= f(y,v, w)\\
z &=  h(y) 
\ea
\ee
with $h(y)=y$, $f(y,v, w) = M^{-1}(-Dy+v+w)$, $w=P_g-P_d$, and incrementally passive dynamics at the edges 
\be\label{e:inc-passive-2}
\ba{rl}
\dot\zeta &=\mu \\
\sigma &= \nabla H_\zeta (\zeta).
\ea
\ee 
where the interconnection constraints are given by
\[
\mu = E^\top y, \quad v= - E \sigma.
\]
We have opted to consider the system \eqref{e:delta-compact} rather than more general subclasses of \eqref{e:inc-passive-1}, \eqref{e:inc-passive-2}, to keep the focus of the paper and provide more explicit results.}
\end{remark}


As a result of the change of coordinates, the network \cdpter{\eqref{e:delta-compact}} now has at most one equilibrium, for given constant vector $P_g$ and $P_d$, and we have the following lemma:

\begin{lem}\label{l:equib}
Let $P_g=\overline P_g$ and $P_d=\overline P_d$ for some constant vectors $\overline P_g, \overline P_d\in \R^n$.  
Then the point $(\overline \zeta, \overline y)$  is an equilibrium of \eqref{e:delta-compact} if and only if
\begin{align}\label{e:y*}
&\overline y=\ones y^*, \qquad y^*=\frac{\sum_{i\in \cal V} \cdp{(\overline P_{gi}-\overline P_{di})}}{\sum_{i\in \calV}  D_i},
\end{align}
\begin{align}\label{e:feas0}
&\nabla H_\zeta (\overline\zeta) =  E^+(I_n - \frac{D\ones \ones^\top}{\ones^\top D\ones}) (\overline P_g- \overline P_d),
\end{align}  
where $E^+=(E^\top E)^{-1}E^\top$. 
Moreover, the equilibrium, if exists, is unique. 
\end{lem}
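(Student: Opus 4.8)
The plan is to set the time derivatives in \eqref{e:delta-compact} to zero and solve the resulting algebraic system, exploiting the structure of $E$ and the properties of $\nabla H_\zeta$. First I would write the equilibrium conditions: $(\overline\zeta,\overline y)$ is an equilibrium iff $E^\top \overline y = 0$ and $0 = -D\overline y - E\,\nabla H_\zeta(\overline\zeta) + \overline P_g - \overline P_d$. From $E^\top\overline y = 0$, since $\ker E^\top = \im\ones$ (equivalently, $E^\top = \bbm I_{n-1} & -\ones_{n-1}\ebm$ has kernel spanned by $\ones_n$), we conclude $\overline y = \ones y^*$ for some scalar $y^*$. Substituting into the second equation gives $E\,\nabla H_\zeta(\overline\zeta) = \overline P_g - \overline P_d - D\ones y^*$.

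Next I would determine $y^*$ by a left-multiplication that annihilates $E$. Since $\im E = \ker R^\top|_{\text{relevant part}}$; more directly, $\ones_n^\top E = \ones_n^\top \bbm I_{n-1} \\ -\ones_{n-1}^\top \ebm^{\!\top}\!$-type computation shows $\ones_n^\top E = 0$ (because the $n$ columns... rather: $E = \bbm I_{n-1} \\ -\ones_{n-1}^\top\ebm$ is $n\times(n-1)$, and summing its rows gives $\ones_{n-1}^\top - \ones_{n-1}^\top = 0$). Hence left-multiplying $E\,\nabla H_\zeta(\overline\zeta) = \overline P_g - \overline P_d - D\ones y^*$ by $\ones_n^\top$ kills the left side and yields $0 = \ones^\top(\overline P_g - \overline P_d) - (\ones^\top D\ones)\, y^*$, i.e. $y^* = \dfrac{\sum_{i\in\calV}(\overline P_{gi}-\overline P_{di})}{\sum_{i\in\calV} D_i}$, which is \eqref{e:y*}.

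Then I would recover \eqref{e:feas0}. Having fixed $y^*$, the equation $E\,\nabla H_\zeta(\overline\zeta) = \overline P_g - \overline P_d - D\ones y^* = (I_n - \tfrac{D\ones\ones^\top}{\ones^\top D\ones})(\overline P_g - \overline P_d)$, where I substituted the expression for $y^*$; note the right-hand side lies in $\im E$ since $\ones^\top$ annihilates it (consistent with $\ones^\top E = 0$ and the fact that $E$ has full column rank $n-1$). Because $E$ has full column rank, $E^+ = (E^\top E)^{-1}E^\top$ is a left inverse, so applying $E^+$ gives $\nabla H_\zeta(\overline\zeta) = E^+(I_n - \tfrac{D\ones\ones^\top}{\ones^\top D\ones})(\overline P_g - \overline P_d)$, which is \eqref{e:feas0}. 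Conversely, if $(\overline\zeta,\overline y)$ satisfies \eqref{e:y*} and \eqref{e:feas0}, reversing these steps (using that the right-hand side of \eqref{e:feas0} pre-multiplied by $E$ returns the original vector, since it was already in $\im E$) shows the time derivatives vanish.

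Finally, uniqueness: $\overline y$ is uniquely determined as $\ones y^*$ with $y^*$ fixed by \eqref{e:y*}. For $\overline\zeta$, since $H_\zeta$ is strictly convex (being built from the strictly convex $H_{ij}$ composed with the full-column-rank $R_\zeta^\top$), its gradient $\nabla H_\zeta$ is strictly monotone, hence injective; therefore $\overline\zeta$ is the unique preimage under $\nabla H_\zeta$ of the right-hand side of \eqref{e:feas0}, when such a preimage exists. The main obstacle I anticipate is the bookkeeping around $E$: verifying cleanly that $\ones^\top E = 0$, that $E$ has full column rank (so $E^\top E$ is invertible and $E^+$ is a genuine left inverse), and that the right-hand side of \eqref{e:feas0} indeed lies in $\im E$ so that the equation $E\,\nabla H_\zeta(\overline\zeta) = \cdots$ is solvable — i.e. that no further feasibility condition is hidden. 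The strict convexity footnote about $H_{ij}$ and the connectedness of $\calG$ (ensuring $R_\zeta$ has full column rank $n-1$, equivalently $m \ge n-1$ and the edges span the cut space) are what make $\nabla H_\zeta$ injective; I would state this explicitly rather than grind through it.
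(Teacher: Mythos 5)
Your proposal is correct and follows essentially the same route as the paper's own proof: extract $\overline y\in\im\ones$ from $E^\top\overline y=0$, pin down $y^*$ by left-multiplying with $\ones^\top$, recover \eqref{e:feas0} via the left inverse $E^+$ (and, for the converse, the observation that the right-hand side lies in $(\im\ones)^\perp=\im E$), and obtain uniqueness of $\overline\zeta$ from strict monotonicity of $\nabla H$ together with the rank-$(n-1)$ property of $R_\zeta$ for a connected graph. The only (cosmetic) deviation is that you invoke injectivity of $\nabla H_\zeta$ via strict convexity of $H_\zeta$ directly, whereas the paper spells this out by pairing $\nabla H_\zeta(\overline\zeta)-\nabla H_\zeta(\tilde\zeta)$ with $\overline\zeta-\tilde\zeta$; also note it is $R_\zeta^\top$ (not $R_\zeta$) that has full column rank.
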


\begin{proof} See Appendix.
\end{proof}

\subsection{Dynamic pricing mechanism}

Next, we seek for a dynamic feedback (pricing mechanism) that steers the physical network to 
\cdp{an asymptotically stable equilibrium, while guaranteeing the convergence of the production and the demand to the Cournot-Nash solution.}
In particular, we are interested to regulate the production $P_g$ to $P_g^\star$, the demand $P_d$ to $P_d^\star$,  and attain \nmo{the optimal} price $p^\star$ given by \eqref{e:opt-star-version}.  Note that, \cdp{statically setting the generation and  production as} 
$P_g=P_g^\star$ and $P_d=P_d^\star$  is undesirable as it requires complete information of the entire network and utility functions. 

Recall that in the Cournot model, consumers are price takers, meaning they optimize their utility functions given a price. Consistent  with 
\eqref{e:Pd-opt-star}-\eqref{e:p*-star}
we consider the demand as
\be\label{eq:demand} 
P_{di}\cdp{(t)}={Q}_{di}^{-1} ({b}_{di} - p_i(t)),
\ee
%
where $p_i(t)$ can be interpreted as a {\em{momentary}} or {\em{estimated}} price for the $i$th consumer at time $t$. In vector form this is written as
$P_d={Q}_d^{-1} ({b}_d - p(t))$, with $p(t)=\col(p_i(t))$.

Next, looking at the expression of $y^*$ in Lemma \ref{l:equib}, we notice that the deviation from the supply-demand matching condition \eqref{balance0}
 is reflected on the steady-state value of the state variable $y$. In fact, \eqref{balance0}
 holds if and only if $y^*=0$. This motivates the implementation of a negative feedback from $y$ to $P_g$ in the controller.
Moreover, in order to ensure optimality, we rely on a communication layer next to the physical network that appropriately distribute the information on the local price estimations $p_i$ over the entire network. The topology of this communication layer is 
\cdp{modeled via}
an undirected connected graph $\calG_c(\calV, \calE_c)$, and the set of neighbors of the 
\cdp{node $i$}
is denoted by $\calN^c_i$. Inspired by the aforementioned remarks, and motivated by some additional stability and optimality considerations, the following distributed controller (pricing mechanism) is  proposed:
\bse\label{e:controller}
\begin{align}
\nm{\tau_i}\dot p_i&=- k_i y_i - {Q}_{di}^{-1}y_i - \sum_{j\in \calN^c_i} \rho_{ij} (p_i-p_j)\\[-2mm]
P_{gi}&=k_i(p_i- b_{gi}) \label{eq:gen}
\end{align} 
\ese
where \nm{$\tau_i>0$} is the time constant, $\rho_{ij}>0$ indicates the weight of the communication at each link, and the constant parameter $k_i>0$ will be specified later. \nm{Let $T=\diag(\tau_i)$, $K=\diag(k_i)$, and the weighted Laplacian matrix of $\calG_c$ be denoted by $L$.
Then the} overall closed-system admits the following state-space representation:
\bse\label{e:CL}
\begin{align}
\dot\zeta &=E^\top y\\
\nonumber
M \dot y&=-Dy -E \,\nabla H_\zeta (\zeta)\\
&\qquad  + K(p- b_g)  - {Q}_d^{-1} ({b}_d -p)\\
\nm{T}\dot p&=- L p - K  y - {Q}_d^{-1}y
\end{align}
\ese 

The result below characterizes the static properties of the closed-loop system \eqref{e:CL}.
\begin{proposition}\label{l:CL-static}
The point $(\overline \zeta, \overline y, \overline p)$  is an equilibrium of \eqref{e:CL} if and only if 
$\overline y=0$,
\be\label{e:price-nonoptimal}
\overline p=\ones_n q, \qquad q=\frac{\ones^\top Kb_g+\ones^\top Q_d^{-1}b_d}{\ones^\top K\ones+\ones^\top Q_d^{-1}\ones},
\ee
and $\overline \zeta$ satisfies 
\be\label{e:feas}
\nabla H_\zeta (\overline\zeta) = E^+(\overline P_g- \overline P_d)
\ee
with
$$\overline P_g= K(\overline p- b_g), \quad \overline P_d=  Q_d^{-1} ({b}_d -\overline p).$$
The equilibrium, if exists, is unique.
Moreover, if $K=(\alpha^* I_n+Q_g)^{-1}$, then $(\overline P_g, \overline P_d, \cdpter{\overline p})=(P_g^\star, P_d^\star, p^\star)$ given by \eqref{e:opt-star-version}. 
\end{proposition}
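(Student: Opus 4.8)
The plan is to analyze the equilibrium conditions of the closed-loop system \eqref{e:CL} directly by setting the left-hand sides (the time derivatives) to zero, and then show the resulting algebraic system has the claimed unique solution. Setting $\dot\zeta = 0$ gives $E^\top \overline y = 0$; since $\ker E^\top = \operatorname{im}\ones$ (as $E^\top = [I_{n-1}\ \ -\ones_{n-1}]$ has rank $n-1$ with kernel spanned by $\ones_n$), we get $\overline y = \ones_n \bar c$ for some scalar $\bar c$. Next, setting $T\dot p = 0$ yields $Lp + (K + Q_d^{-1})\overline y = 0$; premultiplying by $\ones^\top$ and using $\ones^\top L = 0$ gives $\ones^\top (K + Q_d^{-1})\ones\, \bar c = 0$, and since $K, Q_d \succ 0$ the scalar coefficient is strictly positive, forcing $\bar c = 0$, i.e. $\overline y = 0$. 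Substituting back, $L\overline p = 0$, and connectedness of $\calG_c$ gives $\overline p = \ones_n q$ for a scalar $q$.

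To pin down $q$, I would turn to the $M\dot y = 0$ equation: $E\,\nabla H_\zeta(\overline\zeta) = K(\overline p - b_g) - Q_d^{-1}(b_d - \overline p) = \overline P_g - \overline P_d$. Premultiplying by $\ones_n^\top$ and using $\ones_n^\top E = 0$ (which holds because $E^\top \ones_n$... more directly, $\ones_n^\top E = \ones_{n-1}^\top - \ones_{n-1}^\top = 0$ since $E = \left[\begin{smallmatrix} I_{n-1} \\ -\ones_{n-1}^\top\end{smallmatrix}\right]$) gives $\ones_n^\top(\overline P_g - \overline P_d) = 0$, i.e. $\ones^\top K(\ones q - b_g) = \ones^\top Q_d^{-1}(b_d - \ones q)$. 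Solving this scalar linear equation for $q$ yields exactly \eqref{e:price-nonoptimal}. Having fixed $\overline y = 0$ and $\overline p = \ones q$, the remaining condition on $\overline\zeta$ is $E\,\nabla H_\zeta(\overline\zeta) = \overline P_g - \overline P_d$; since $\ones^\top(\overline P_g - \overline P_d) = 0$, the right-hand side lies in $\operatorname{im} E$, so we may left-multiply by $E^+ = (E^\top E)^{-1}E^\top$ to obtain \eqref{e:feas}; conversely this determines $\nabla H_\zeta(\overline\zeta)$ uniquely, and by strict convexity of $H_\zeta$ (hence strict monotonicity of $\nabla H_\zeta$) the vector $\overline\zeta$ is uniquely determined — establishing both existence-characterization and uniqueness of the equilibrium. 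For the last clause, I would substitute $K = (\alpha^* I_n + Q_g)^{-1}$ into \eqref{e:price-nonoptimal} and into $\overline P_g = K(\overline p - b_g)$, $\overline P_d = Q_d^{-1}(b_d - \overline p)$, and compare with the implicit characterization \eqref{e:opt-implicit} of the optimal triple (with $\alpha = \alpha^*$, $\beta = \beta^*$): the formula $\overline P_g = (\alpha^* I + Q_g)^{-1}(\ones q - b_g)$ matches \eqref{e:opt-implicit} provided $q = p^\star$, so the crux is verifying that the $q$ in \eqref{e:price-nonoptimal} coincides with $p^\star = \beta^* - \alpha^* \ones^\top P_g^\star$; this reduces to an algebraic identity relating \eqref{e:price-nonoptimal}, the definitions \eqref{e:alpha-beta} of $\alpha^*, \beta^*$, and the fixed-point relation $\ones^\top K(\ones q - b_g) = \ones^\top Q_d^{-1}(b_d - \ones q)$, which is precisely the balancing condition $\ones^\top \overline P_g = \ones^\top \overline P_d$.

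The main obstacle I anticipate is this last verification that $q = p^\star$: one must carefully reconcile the two different-looking expressions — the Laplacian-consensus value \eqref{e:price-nonoptimal} arising from the closed-loop dynamics, and the Cournot-Nash optimal price from Theorem \ref{p:optimal}. The key observation making it work is that the closed-loop equilibrium automatically enforces $\ones^\top \overline P_g = \ones^\top \overline P_d$ (from the $\ones_n^\top E = 0$ argument), which is exactly the supply-demand balancing condition \eqref{balance0}; combined with the implicit form \eqref{e:opt-implicit} (equivalently \eqref{straight.manip} in Remark \ref{r:equivalence}, which expresses $P_g^\star = (\alpha^* I + Q_g)^{-1}(\ones p^\star - b_g)$ under the balancing choice $\alpha=\alpha^*,\beta=\beta^*$), this forces $\overline P_g$ to satisfy the same linear system as $P_g^\star$, and uniqueness of that solution closes the argument. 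A minor point to be careful about is that Proposition \ref{l:CL-static} is phrased as an "if and only if" characterization of equilibria without asserting existence, so the proof need only show that any equilibrium must have the stated form and that such data are consistent — no existence claim for $\overline\zeta$ is required beyond noting the right-hand side of \eqref{e:feas} is well-defined, with actual existence depending on the range of $\nabla H_\zeta$.
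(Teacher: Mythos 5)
Your proposal is correct and follows essentially the same route as the paper's proof: derive $\overline y=0$ from $E^\top\overline y=0$ and the $\ones^\top$-projection of the price dynamics, obtain $\overline p=\ones q$ from $L\overline p=0$, pin down $q$ and \eqref{e:feas} by projecting the $M\dot y$ equation (the step the paper defers to ``manipulations analogous to Lemma \ref{l:equib}''), and recover $(P_g^\star,P_d^\star,p^\star)$ from the identity $q=\beta^*-\alpha^*\ones^\top\overline P_g$. Your derivation of that identity via the balancing condition $\ones^\top\overline P_g=\ones^\top\overline P_d$ is just an equivalent rearrangement of the paper's direct algebra on \eqref{e:price-nonoptimal}, so there is no substantive difference.
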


 \begin{proof}
  Suppose that $(\overline \zeta, \overline y, \overline p)$ is an equilibrium of \eqref{e:CL}. Then, 
 \bse\label{e:CL-proof}
\begin{align}
0 &=E^\top \overline y\\
\nonumber
0&=-D\overline y -E \,\nabla H_\zeta (\overline \zeta)\\
&\qquad  + K(\overline p- b_g)  - {Q}_d^{-1} ({b}_d - \overline p)\\
\label{e:Cl-equib-controller-proof}
0&=- L \overline p - K \overline  y - {Q}_d^{-1}\overline y.
\end{align}
\ese 
By the first equality, we have $\overline y=\ones y^*$ for some $y^*\in \R$. Substituting this into \eqref{e:Cl-equib-controller-proof}, and multiplying both sides of \eqref{e:Cl-equib-controller-proof} from the left by $\ones^\top$, we obtain that $y^*=0$. Hence, \eqref{e:Cl-equib-controller-proof} results in 
$\overline p=\ones  q $ for some $q\in \R$. The fact that $q$ is given by \eqref{e:price-nonoptimal}, and that \eqref{e:feas} holds, follow  from suitable algebraic manipulations analogous to the proof of Lemma \ref{l:equib}. The converse result as well as uniqueness of the equilibrium also follow analogous to Lemma \ref{l:equib}.

By \eqref{e:price-nonoptimal}, we have
\[
(1+\alpha^* \ones^\top K \ones ) q= \beta^*+ \alpha^* \ones^\top K b_g
\]
where $\alpha^*$ and $\beta^*$ are given by \eqref{e:alpha-beta}. The equality above can be written as
$
q= \beta^*- \alpha^* \ones^\top K(\ones q- b_g),
$
which yields 
\be\label{e:q-implicit}
q=\beta^*- \alpha^* \ones^\top \overline P_g.
\ee
Equivalently, we have 
$K^{-1}K (\ones q - b_g)+ b_g= \ones\beta^*- \alpha^* \ones\ones^\top \overline P_g,$
and hence
\[( \alpha^* \ones\ones^\top + K^{-1}) \overline P_g= \ones\beta^*- b_g.\]
By setting $K=(\alpha^* I_n+Q_g)^{-1}$, the equality above returns $\overline P_g=P_g^\star$, with $P_g^\star$ given by \eqref{e:Pg-opt}.
Then, by comparing \eqref{e:q-implicit} to \eqref{e:p*}, we find that $q=p^\star$. Finally, 
\[
\overline P_d=Q_d^{-1} (b_d- \ones q)= Q_d^{-1} (b_d- \ones \beta^* +\alpha^* \ones \ones^\top P_g^\star  )=P_d^\star, 
\]
where the second equality follows from \eqref{e:q-implicit}, and the last one from \eqref{e:Pd-opt}.
 \end{proof}
 
 \begin{remark}
In case of linear dynamics, namely $2H_\zeta(\zeta)=\zeta^\top R_\zeta W  R_\zeta^\top  \zeta$, $W>0$, the vector $\overline \zeta$ is explicitly obtained as
\[
\overline \zeta = (R_\zeta W R_\zeta^\top)^{-1}  E^+ (\overline P_g- \overline P_d).
\]
\end{remark}
\medskip{}

\cdpter{Proposition}  \ref {l:CL-static} imposes the following assumption: 

\begin{assum}\label{a:feas}
There exists $\overline \zeta\in \R^{n-1}$ such that \eqref{e:feas}
is satisfied.
\end{assum}

\begin{remark}
As evident from \eqref{e:feas0},  the condition in Assumption \ref{a:feas} is a consequence of the agents' dynamics \eqref{e:2order}, rather than the choice of the controller.
In case the graph $\calG$ is a tree, the incidence matrix has full column rank and Assumption \ref{a:feas} is always satisfied with
\[
\overline \zeta=  (R_\zeta ^+)^\top  \nabla H ^{-1} \big((R^\top R)^{-1} R^\top (\overline P_g- \overline P_d)\big)
\]
where $R_\zeta ^+$ is a right inverse of $R_\zeta$, and $\nabla H( \nabla H^{-1} (x))=x, \forall x\in \R^m.$
\end{remark}

The next theorem provides the main result of this section, which validates the proposed feedback algorithm.  

\begin{theorem}\label{t:main-dynamic}
Let Assumption \ref{a:feas} hold. 
Then, the equilibrium $(\overline\zeta, \overline y, \overline p)$ of \eqref{e:CL} is asymptotically stable. 
Moreover, for $K= (\alpha^* I_n+Q_g)^{-1}$, the vector $(P_g, P_d, p)$, \cdpter{with $P_g$ defined as in \eqref{eq:gen} and $P_d$ as in \eqref{eq:demand},} 
 asymptotically converges to the optimal Cournot-Nash solution $(P_g^\star, P_d^\star, p^\star)$, \cdp{the latter} given by \eqref{e:opt-star-version}. 
\end{theorem}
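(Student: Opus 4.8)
The plan is to build a Lyapunov function for the closed-loop system \eqref{e:CL} around the equilibrium $(\overline\zeta,\overline y,\overline p)$ whose existence and uniqueness are guaranteed by Proposition~\ref{l:CL-static} under Assumption~\ref{a:feas}. First I would shift coordinates, writing $\tilde\zeta=\zeta-\overline\zeta$, $\tilde y=y-\overline y=y$ (since $\overline y=0$), $\tilde p=p-\overline p$, and rewrite the dynamics in incremental form. Exploiting that at the equilibrium $0=-E\nabla H_\zeta(\overline\zeta)+K(\overline p-b_g)-Q_d^{-1}(b_d-\overline p)$ and $0=-L\overline p$, the incremental system becomes $\dot{\tilde\zeta}=E^\top\tilde y$, $M\dot{\tilde y}=-D\tilde y-E(\nabla H_\zeta(\zeta)-\nabla H_\zeta(\overline\zeta))+(K+Q_d^{-1})\tilde p$, $T\dot{\tilde p}=-L\tilde p-(K+Q_d^{-1})\tilde y$. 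The key structural observation, already flagged in the remark on incremental passivity \eqref{e:inc-passive-1}--\eqref{e:inc-passive-2}, is that the coupling between the $\tilde y$ and $\tilde p$ subsystems is skew-symmetric once we note that the same matrix $K+Q_d^{-1}$ appears with opposite signs.

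The candidate Lyapunov function is
\be\label{e:lyap-plan}
V=\tfrac12\tilde y^\top M\tilde y+\big(H_\zeta(\zeta)-H_\zeta(\overline\zeta)-\nabla H_\zeta(\overline\zeta)^\top(\zeta-\overline\zeta)\big)+\tfrac12\tilde p^\top T(K+Q_d^{-1})^{-1}\tilde p.
\ee
The Bregman-type middle term is nonnegative and equals zero only at $\zeta=\overline\zeta$ because $H_\zeta$ is strictly convex (inherited from strict convexity of the $H_{ij}$), and $T(K+Q_d^{-1})^{-1}$ is positive definite since $T$, $K$, $Q_d$ are positive diagonal; hence $V$ is positive definite about the equilibrium. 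Differentiating along solutions, the $E^\top$/$E$ terms cancel between the first two summands, the cross terms $(K+Q_d^{-1})\tilde p$ in $\dot{\tilde y}$ and $-(K+Q_d^{-1})\tilde y$ in $\dot{\tilde p}$ cancel after multiplying the $\tilde p$-equation by $(K+Q_d^{-1})^{-1}$, and one is left with $\dot V=-\tilde y^\top D\tilde y-\tilde p^\top T(K+Q_d^{-1})^{-1}L\,(K+Q_d^{-1})^{-1}\tilde p$. Wait — I need to be careful here: the quadratic form in $\tilde p$ actually comes out as $-\tilde p^\top L\tilde p$ after the multiplication, which is negative semidefinite since $L\succeq 0$; I would double-check the bookkeeping but the sign is favorable regardless. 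Thus $\dot V\le 0$, with $\dot V=0$ forcing $\tilde y=0$ and $\tilde p\in\ker L=\im\ones$.

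I would then invoke LaSalle's invariance principle: on the largest invariant set in $\{\dot V=0\}$ we have $\tilde y\equiv0$, so $\dot{\tilde\zeta}=0$ and, from the $\tilde y$-equation, $E(\nabla H_\zeta(\zeta)-\nabla H_\zeta(\overline\zeta))=(K+Q_d^{-1})\tilde p$; since $\tilde p=\ones c$ for a scalar $c$ and $L\ones=0$, the $\tilde p$-equation gives $\dot{\tilde p}=0$ so $c$ is constant, and projecting $E(\cdot)=(K+Q_d^{-1})\ones c$ onto $\ones$ (using $\ones^\top E=0$) forces $\ones^\top(K+Q_d^{-1})\ones\,c=0$, hence $c=0$, i.e. $\tilde p\equiv0$; then $E(\nabla H_\zeta(\zeta)-\nabla H_\zeta(\overline\zeta))=0$ and injectivity of $E$ together with strict monotonicity of $\nabla H_\zeta$ yields $\zeta=\overline\zeta$. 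By uniqueness of the equilibrium (Proposition~\ref{l:CL-static}) the invariant set is the singleton $\{(\overline\zeta,\overline y,\overline p)\}$, giving asymptotic stability. Finally, specializing $K=(\alpha^*I_n+Q_g)^{-1}$, Proposition~\ref{l:CL-static} identifies $(\overline P_g,\overline P_d,\overline p)=(P_g^\star,P_d^\star,p^\star)$; since $P_g(t)=K(p(t)-b_g)$ and $P_d(t)=Q_d^{-1}(b_d-p(t))$ are continuous functions of $p(t)\to\overline p$, the tuple $(P_g,P_d,p)$ converges to $(P_g^\star,P_d^\star,p^\star)$ as claimed.

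The main obstacle I anticipate is not the energy dissipation argument — that is the standard passivity-plus-DAPI template — but rather the LaSalle refinement showing $\tilde p\to0$ rather than merely $\tilde p\to\im\ones$: one must carefully use the structure of the $\tilde y$-equation on the invariant set (which couples $\zeta$ and $\tilde p$) together with $\ones^\top E=0$ to kill the consensus component of $\tilde p$, and then separately recover $\zeta=\overline\zeta$ from injectivity of $E$ on $\R^{n-1}$ and strict monotonicity of $\nabla H_\zeta$. A secondary technical point is ensuring the boundedness/precompactness of trajectories needed to apply LaSalle, which follows from radial unboundedness of $V$ in $\tilde y$ and $\tilde p$ and from the coercivity of the Bregman term under strict convexity of $H_\zeta$ (with the caveat, already noted in the paper's footnote, that merely locally strictly convex $H_{ij}$ give only a local result).
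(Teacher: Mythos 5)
Your overall strategy---shifted coordinates, a Bregman term for $\zeta$, quadratic storage for $y$ and $p$, LaSalle's invariance principle, and then Proposition~\ref{l:CL-static} to identify the limit as $(P_g^\star,P_d^\star,p^\star)$---is exactly the paper's. Your LaSalle refinement (killing the consensus component of $\tilde p$ by projecting the $\tilde y$-equation onto $\ones$ and using $\ones^\top E=0$) is a more explicit version of the paper's terser argument that every point of the invariant set is an equilibrium of \eqref{e:CL} and the equilibrium is unique; the final continuity argument for $(P_g,P_d,p)$ is identical.

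The one genuine problem is the weight $(K+Q_d^{-1})^{-1}$ you insert in the $\tilde p$-term of your candidate Lyapunov function. With that weight the cross terms do \emph{not} cancel: the $\tilde y$-block contributes $+\tilde y^\top (K+Q_d^{-1})\tilde p$, while differentiating $\tfrac12 \tilde p^\top T (K+Q_d^{-1})^{-1}\tilde p$ along $T\dot{\tilde p}=-L\tilde p-(K+Q_d^{-1})\tilde y$ contributes only $-\tilde p^\top\tilde y$ (the diagonal matrices commute, so $T(K+Q_d^{-1})^{-1}T^{-1}=(K+Q_d^{-1})^{-1}$), leaving a residual $\tilde y^\top(K+Q_d^{-1}-I)\tilde p$ of indefinite sign. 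Moreover, the remaining term $-\tilde p^\top (K+Q_d^{-1})^{-1}L\tilde p$ is a quadratic form in the non-symmetric matrix $(K+Q_d^{-1})^{-1}L$, whose symmetric part need not be positive semidefinite even though $L$ is, so the "sign is favorable regardless" claim does not hold as stated. The structural observation you yourself make---that the same \emph{symmetric} matrix $K+Q_d^{-1}$ appears with opposite signs in the two equations---already means the interconnection is skew-symmetric with respect to the \emph{unweighted} storage; the correct choice is therefore simply $\tfrac12\tilde p^\top T\tilde p$ (the paper's $V$), for which $\tilde y^\top(K+Q_d^{-1})\tilde p$ and $-\tilde p^\top(K+Q_d^{-1})\tilde y$ cancel exactly and $\dot V=-\tilde y^\top D\tilde y-\tilde p^\top L\tilde p\le 0$, which is precisely the expression you anticipated. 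With that single correction the rest of your argument goes through verbatim.
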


\begin{proof}
To prove asymptotic stability, we consider the Lyapunov function candidate 
\begin{align*}
V=\frac{1}{2} (y-\overline y)^\top M(y-\overline y)+ \frac{1}{2}( p -\overline p)^\top\nm{T}(p-\overline p)+ \calH(\zeta)
\end{align*}
where $\calH$ takes the form of the Bregman distance {between $\zeta, \overline \zeta$ associated with the {\it distance-generating}} function $H(\zeta)$ and the point $\overline \zeta$, namely \cite{bregman1967relaxation}
\[
\calH(\zeta)= H(\zeta)-H(\overline \zeta)- (\zeta- \overline \zeta)^\top \displaystyle \left.\frac{\partial H}{\partial \zeta}\right|_{\overline \zeta} .
\]
Since $H$ is strictly convex, the Bregman distance $\calH$ is nonnegative and is equal to zero whenever $\zeta=\overline \zeta$. Then, clearly the function $V$ has a strict minimum at $(\overline \zeta, \overline y, \overline p)$. Computing the time derivative of $V$ along the solutions of \eqref{e:CL} yields
\[
\dot V= -(y-\overline y)^\top D(y- \overline y) - (p -\overline p)^\top L(p-\overline p),
\]
where we have used \eqref{e:CL-proof} together with the fact that $$\frac{\partial{\calH}}{\partial{\zeta}}=\left.\frac{\partial{H}}{\partial{\zeta}}-\frac{\partial{H}}{\partial{\zeta}}\right|_{\overline \zeta} .$$ As $V$ is positive definite, and $\dot V$ is nonpositive, we conclude that solutions of \eqref{e:CL} are bounded.
By invoking  LaSalle's invariance principle, on the invariant set we have
$
y=\overline y, \quad Lp=L\overline p.
$
Noting that $\overline y=0$ and $L\overline p=0$, we find that each point on the invariant set is an equilibrium of \eqref{e:CL}. By Proposition \ref{l:CL-static}, the equilibrium is unique, and therefore the invariant set comprises only the equilibrium point $(\overline \zeta, 0, \overline p)$ given by \eqref{e:price-nonoptimal} and \eqref{e:feas}. 
By continuity, the vectors $P_g$ and $P_d$ asymptotically converge to $\overline P_g= K( \overline p- b_g)$ and $\overline P_d=  Q_d^{-1} ({b}_d - \overline p)$, respectively. For $K=(\alpha^* I_n+Q_g)^{-1}$, by Proposition \ref{l:CL-static} we have $(\overline P_g, \overline P_d, \overline p)=(P_g^\star, P_d^\star, p^\star)$. This completes the proof. 
\end{proof}

\begin{remark}
By Theorem \ref{t:main-dynamic}, the controller \eqref{e:controller} with $k_i= (\alpha^* + Q_{gi})^{-1}$, steers the network to the Cournot-Nash optimal solution. 
Note that,  with the exception of the parameter $\alpha^*$, the $i$-th controller uses only the local variables at node $i$ together with the communicated variables 
$p_i-p_j$  of the neighboring nodes in the communication graph. If the parameter $\alpha^*$ is not precisely known, then $k_i$ is set to $(\hat\alpha^*_i + Q_{gi})^{-1}$ where $\hat\alpha^*_i$ is an approximation of $\alpha^*$ at node $i$. This approximation will shift the equilibrium of the closed-loop system away from the one associated with the Cournot-Nash solution. However, by  Theorem \ref{t:main-dynamic}, asymptotic stability will not be jeopardized, local price variables will synchronize, and the vector $(P_g, P_d, p)$ will converge to the point 
$(\overline P_g, \overline P_d, \overline p)$ given in Lemma \ref{l:CL-static}. The investigation of how far the equilibrium is from the Cournot-Nash equilibrium in the presence of uncertainty on $\alpha^*$ is left for future research. 
\end{remark}


\begin{remark}
When $\alpha^*$ is not precisely known, another possibility is to estimate the parameter 
in advance, 
as utility functions are not frequently changing.
To this end, one can implement a distributed algorithm such as
\bse\label{e:estimator}
\begin{align}
\dot\chi_{ij}&= \hat\alpha_i- \hat\alpha_j, \qquad \{i, j\} \in E_c \\
\dot{\hat\alpha}_i&= \frac{1}{n}- Q_{di}^{-1}  \hat\alpha_i - \sum_{j\in \calN_i^c}  \kappa_{ij} \chi_{ij}, \quad i\in \calV
\end{align} 
\ese
which requires local parameter $Q_{di}$, communicated variables $\chi_{ij}$, and assumes that each controller is aware of the total number of participating agents, namely $n$.
It is easy to see that $\hat\alpha$ asymptotically converges to $\alpha^*=(\ones^\top Q_d^{-1}\ones)^{-1}$.
\nm{While it is difficult to provide analytical guarantees for the online use of this estimator in the controller \eqref{e:controller}, our numerical investigation in Section \ref{s:simulation} validates stability and performance of such a scheme.} 
\end{remark}

\section{Case study}\label{s:simulation}
We illustrate the proposed pricing mechanism on a \cdpter{specific example of a network with producers and consumers, namely a} four area power network \cite{trip-et-al-aut16}, see \cite{nabavi2013topology} on how a four area network equivalent can be obtained for the IEEE New England 39-bus system or the South Eastern Australian 59-bus.
The power network model we consider here is given by the so-called swing equation \cite{kundur94}, and is mathematically equivalent to the dynamics in \eqref{e:delta-compact}, under the assumption that voltages are constant and the frequency dynamics is decoupled from the reactive power flow. In this case, $\zeta$ is the vector of phase angles measured with respect to the phase angle of a reference bus (area $4$), $y$ is the vector of frequency deviations from the nominal frequency ($50/60$Hz), and the diagonal matrices $M$ and $D$ collect the inertia and damping constants. The vectors $P_g$ and $P_d$ denote the vector of generation and demand as before. The numerical values of the system parameters are provided in Table \ref{t:param}. The physical and communication graphs, namely $\calG$ and $\calG_c$, are depicted in Figure \ref{f:numerical}, where the solid and dotted edges denote the transmission lines and communication links, respectively.

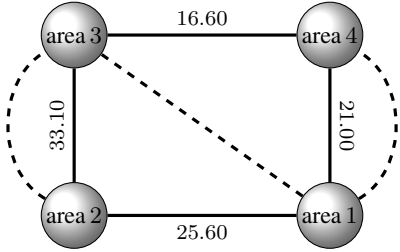
\begin{figure}
\[\begin{tikzpicture}[x=1.7cm, y=1.2cm,
    every edge/.style={sloped, draw, line width=1.2pt}]


\vertex (v3) at (2,1)  {\small \,area$\,3\,$};
\vertex (v1) at (4,-1) {\small \,area$\,1\,$};
\vertex (v2) at (2,-1) {\small \,area$\,2\,$};
\vertex (v4) at (4,1) {\small \,area$\,4\,$};
%
\path
(v1) edge node[anchor=north]{\weight{\rm 25.60}}(v2)
(v2) edge node[anchor=south]{\weight{\rm 33.10}}(v3)
(v3) edge node[anchor=south]{\weight{\rm 16.60}}(v4)
(v4) edge node[anchor=south]{\weight{\rm 21.00}}(v1)
(v1) edge[dashed] node[left]{}(v3)
(v2) edge[dashed, bend left,in=120,out=60] node[left]{}(v3)
(v1) edge[dashed, bend left,in=-120,out=-60] node[left]{}(v4)
                ;
\end{tikzpicture}\]
\caption{The solid lines denote the transmission lines, and the dashed lines depict the communication links. The edge weights indicate the susceptance of the transmission lines.}\label{f:numerical}
\end{figure}

For each $\{i,j\}\in \calE$, the (locally convex) function $H_{ij}$ in \eqref{e:2order} is given by $-|B_{ij}|V_iV_j\cos(x_i-x_j)$, where $B_{ij}<0$ is the susceptance of the line $\{i,j\}$, and $V_i$ and $x_i$ are the voltage magnitude and voltage phase angle at the $i$th area (bus).
In \eqref{e:delta-compact}, this yields the expression 
\[
\nabla H= R_\zeta\nmo{W} \boldsymbol{\sin}(R_\zeta^\top\xi),
\]
where $W=\diag(w_k)$, with $w_k:=B_{ij}V_iV_j$, $k\sim \{i, j\}$, and $\boldsymbol{\sin}(\cdot)$ is interpreted elementwise.


\begin{figure}[t!]
\hspace*{-0.2cm}
\includegraphics[width = 9cm]{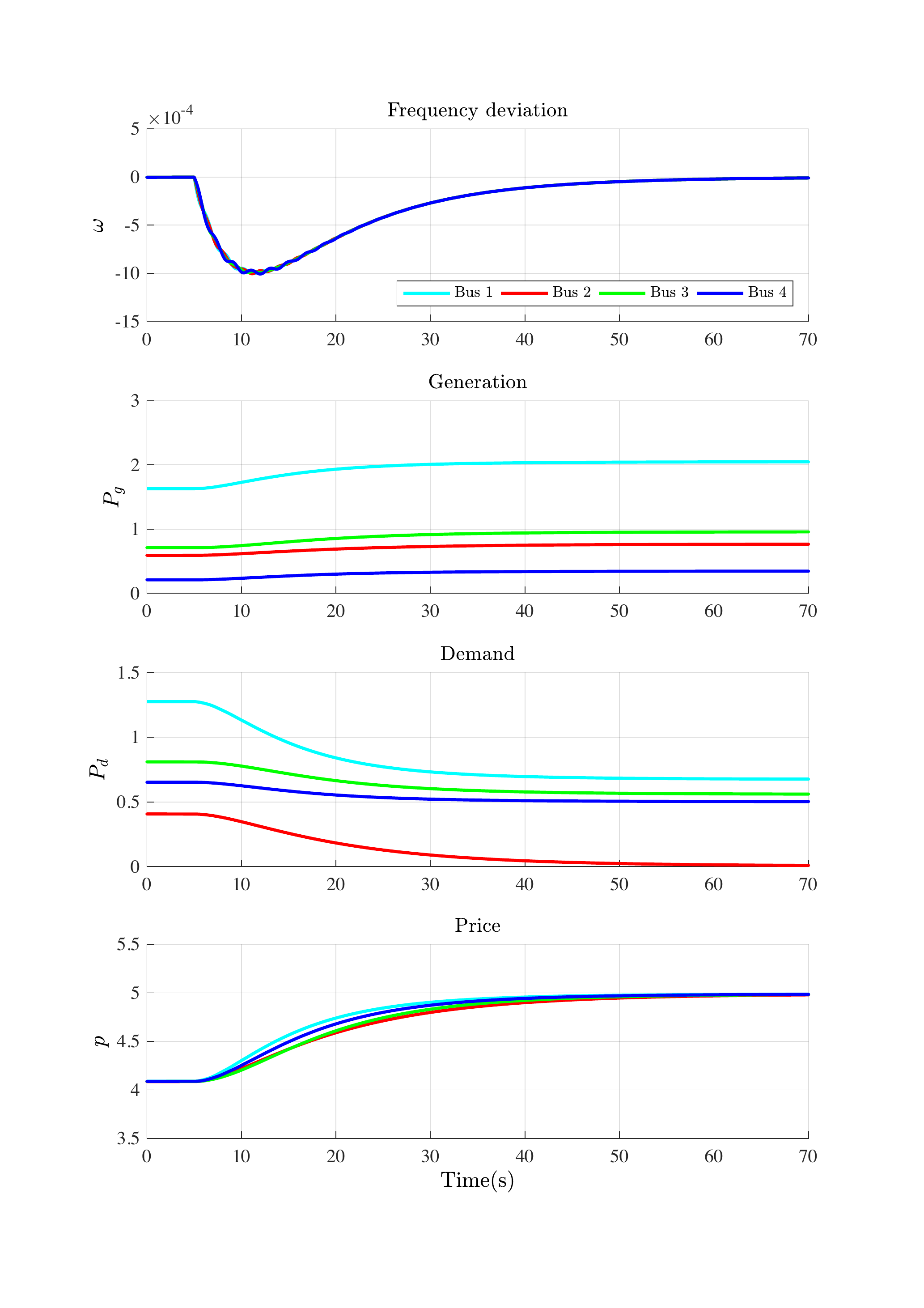}
\caption{Numerical simulation of the closed-loop system \eqref{e:CL}.}
\label{f:nom}
\end{figure}

\begin{figure}[t!]
\hspace*{-0.2cm}
\includegraphics[width = 9cm]{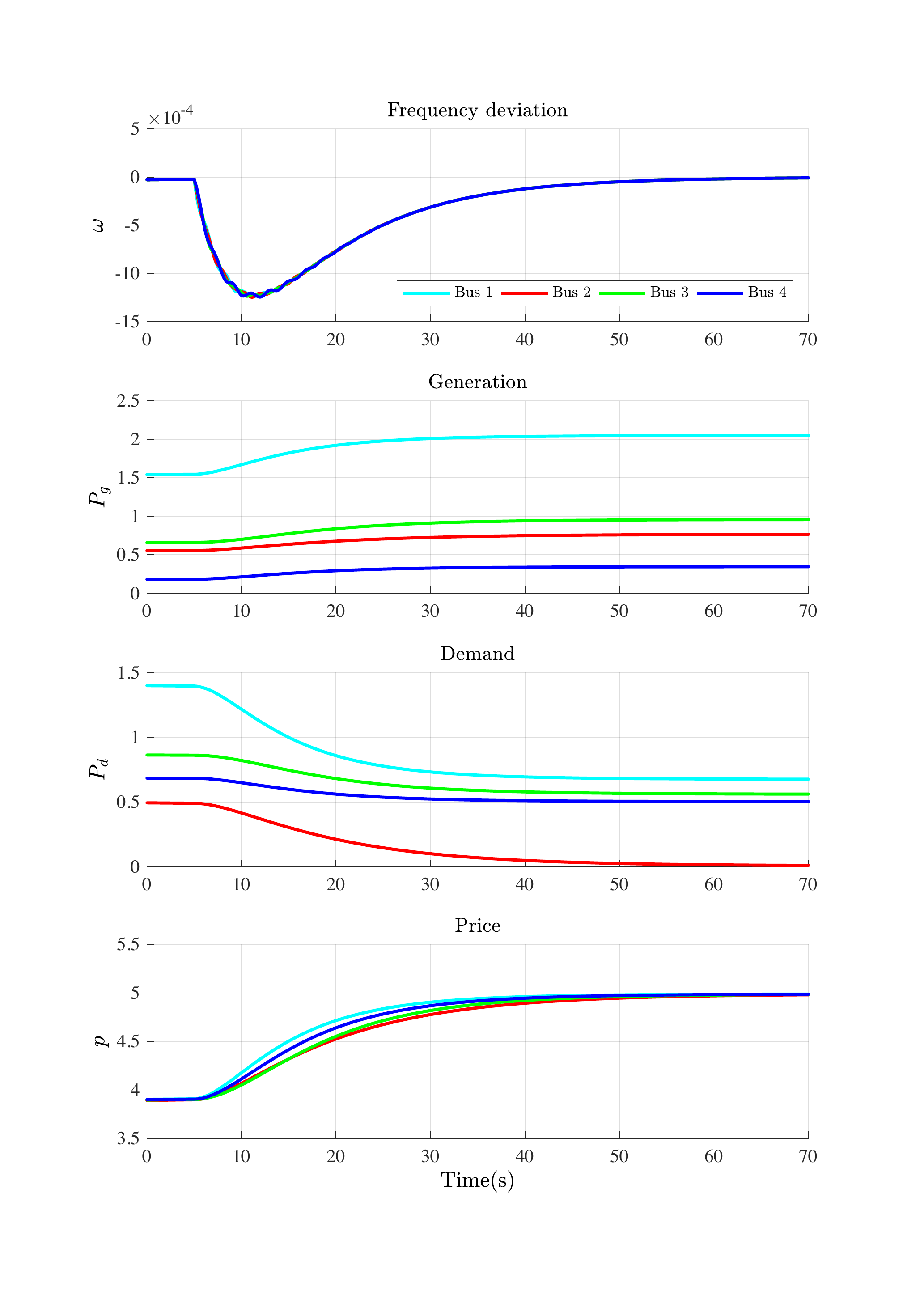}
\caption{Numerical simulation of the closed-loop system \eqref{e:CL-estimator}.}
\label{f:estimator}
\end{figure}
We consider linear-quadratic cost and utility functions given by \eqref{e:Cg-LQ}, \eqref{e:U-LQ}, with the parameters provided in Table \ref{t:param}.

\begin{table}[h!]
	\centering
	\caption{Simulation Parameters}
	\label{t:param}
	\scalebox{1}{
	\begin{tabular}{cccccccccccc}
		\toprule
		{\rm Areas} & 1 & 2 & 3 & 4\\[1.2mm]
		\midrule
		$M_i$ & 5.22 & 3.98 & 4.49 & 4.22\\[1.2mm]
		$D_i$ & 1.60 & 1.22 & 1.38 & 1.42 \\  [1.2mm]
		$Q_{gi}$ &  1.50 & 4.50 & 3.00 & 6.00\\[1.2mm]
		$b_{gi}$  &   0.60 & 1.05 & 1.50 & 2.70\\ [1.2mm]
		$Q_{di}$ &  1.50 & 2.25 & 3.60 & 6.00\\ [1.2mm]
		$b_{di}$  & 6.00 & 5.00 & 7.00 & 8.00\\[1.2mm]
		$\tau_i$ & 2.00 & 3.00 & 3.00 & 1.50 \\[1.2mm]
		
		\bottomrule
	\end{tabular}}
	\vspace{-0.3cm}
\end{table}
We consider the distributed controller in \eqref{e:controller}, and we set $\rho_{ij}=1$, for each $\{i, j\}\in \calE_c$.
The closed-loop system \eqref{e:CL} is initially at steady-state. At time $t=5$s, we modify the utility functions by increasing $b_d$ by $25$ percent, which results in a step in the demand. The response of the closed-loop system to this change is shown in Figure \ref{f:nom}, where the values are in per unit with respect to a base power of $1000$MVA.  As can be seen in the figure, at steady-state the frequency is regulated to its nominal value, which indicates that the matching condition \eqref{balance0} is satisfied. The local prices converge to the same value which identifies the market clearing price $p^\star$. As desired, the triple $(P_g, P_d, p)$ converge to the Cournot-Nash optimal solution 
\vspace*{-0.85cm}
\be\label{e:opt-numerical}
P_g^\star=\bbm
2.05\\
    0.77\\
    0.96\\
    0.34\\
\ebm
, \quad
P_d^\star=
\bbm
1.67\\
   0.56\\
   1.04\\
   0.83\\
\ebm
,\quad 
p^\star=4.99.
\ee
Next, we consider the case where the parameter $\alpha^*$ is unknown and is identified in real-time by the estimator \eqref{e:estimator}. 
This results in the closed-loop dynamics
\bse\label{e:CL-estimator}
\begin{align}
\dot\zeta &=E^\top y\\
\nonumber
M \dot y&=-Dy -E \,\nabla H_\zeta (\zeta)\\
&\qquad  + (\hat\alpha I_n+Q_g)^{-1}(p- b_g)  - {Q}_d^{-1} ({b}_d -p)\\
T\dot p&=- L p - (\hat\alpha I_n+Q_g)^{-1}  y - {Q}_d^{-1}y\\
\dot\chi&= R_c^\top\hat\alpha\\
\dot{\hat\alpha}&= \frac{1}{n}\ones- Q_{d}^{-1}  \hat\alpha - R_c \chi, 
\end{align} 
\ese 
where $R_c$ denotes the incidence matrix of the communication graph, and we have set $\kappa_{ij}=1$ for simplicity.
The system is initially at steady-state. At time $t=5$s, we modify as before the utility functions by increasing $b_d$ by $25$ percent.
At the same time, we decrease the elements of $Q_d$ by $20$ percent, which modifies the actual value of $\alpha$ according to \eqref{e:alpha-beta}.
For a better comparison to the system without the estimator, the initial value of $Q_d$ is chosen such that its new value will be equal to the one provided in Table \ref{t:param}. The response of the closed-loop system \eqref{e:CL-estimator} is illustrated in Figure \ref{f:estimator}.
As can be seen from the figure, frequency is regulated to its nominal value and the triple $(P_g, P_d, p)$ converge again to the one given by \eqref{e:opt-numerical}. This means that the controller \eqref{e:controller} equipped with the estimator \eqref{e:estimator} is able to steer the network to the Cournot-Nash optimal solution. Note that compared to Figure \ref{f:nom}, the transient performance is only slightly degraded. 

\nmo{
\section{Conclusions}\label{sec:4}
We have proposed a distributed feedback algorithm that steers a dynamical network to a prescribed equilibrium corresponding to the so-called Cournot-Nash equilibrium. We characterized this equilibrium for linear-quadratic utility and cost functions, and specified the algebraic conditions under which the production and the demand are strictly positive for all agents. 
For a class of passive nonlinear second-order systems, where production and demands act as external inputs to the systems, we devised a control algorithm (pricing mechanism) that guarantees the convergence of the closed loop system to the optimal equilibrium point
associated with the previously characterized Cournot-Nash equilibrium.
Considering a different type of competition such as Bertrand and Stackelberg games, as well as a thorough comparison of the equilibrium points resulting from  competitive games against those obtained from  a social welfare problem are of interest for future research.}

\section*{Appendix}

\begin{pfof}{Lemma \ref{l:equivalence-demand}}
{From Corollary \ref{first.order.optimality.condition.lq}, it follows that the first two statements are equivalent, and they imply the third statement. It remains to show that {\it 3)$\Rightarrow$2)}.
Now, suppose that the third statement holds. From $P_d=Q_d^{-1}(b-\ones p)$,  using again Corollary \ref{first.order.optimality.condition.lq}, we obtain that $p\leq b_{dj}$ for all $j\in J$. The condition $p\neq \underline b_d$ yields $p<\underline b_d$, which completes the proof.}
\end{pfof}

\begin{pfof}{Lemma \ref{cor:opt.Pg}}
{{\it 1)$\Rightarrow$2)} Suppose that the first statement holds. Then, by Proposition \ref{first.order.optimality.condition.game.lq}, we have
\be\label{e:Pg-first-item-lem}
P_g= (2 \alpha I + Q_g )^{-1}(\beta \ones-b_g + \alpha (I - \mathds{1}  \mathds{1}^\top) P_g). 
\ee
This is equivalent to
$
\alpha P_g+\alpha \mathds{1}  \mathds{1}^\top P_g + Q_g  P_g = \beta \ones-b_g,
$
and to
$
P_g= (\alpha (I+\mathds{1}  \mathds{1}^\top)  +Q_g )^{-1}(\beta \ones-b_g).
$
Hence, {\eqref{Pg.opt} is obtained. \cdpter{This also shows the uniqueness of the solution}.  
Now, note that
$
(\alpha (I+\ones \ones^\top) +Q_g) P_g=\beta \ones - b_g,
$
which is equivalent to
$
(\alpha I+Q_g)  P_g= \ones (\beta -\alpha \ones^\top P_g) - b_g.
$
Element-wise, this can be written as
$
(\alpha+ Q_{gi}) P_{gi}= \beta -\alpha \ones^\top P_g - b_{gi}.
$ Since $P_{gi}>0$ for all $i$, we obtain that $\beta -\alpha \ones^\top P_g > \overline b_{g}$.
}
\medskip{}

{\it 2)$\Rightarrow$3)} Next, suppose that the second statement of the lemma holds. Then, using the same chain of equivalences as above, we obtain \eqref{e:Pg-first-item-lem}. Therefore, by Proposition \ref{first.order.optimality.condition.game.lq}, we must have that {$\gamma_i(P_{-gi})\cdpter{>} 0$ for all $i\in I$. Without loss of generality, assume that $b_{g1}=\overline b_g$. 
\cdpter{Suppose by contradiction that $\gamma_1(P_{-g1})\le  0$.} Then, we have $P_{g1}=0$, and thus  $\beta- \overline b_{g}-\alpha \ones^\top P_g=0.$ This contradicts the \nm{inequality}
in the second statement of the lemma, which completes this part of the proof.} 


{\it 3)$\Rightarrow$1)}  Now, let the third statement hold, and set 
\be\label{e:hatPg}
\hat P_g:= (\alpha (I+\mathds{1}  \mathds{1}^\top)  +Q_g )^{-1}(\beta \ones-b_g), \quad \hat P_g\in \R^n_{>0}.
\ee
Again, the vector $\hat P_g$ can be written in analogy to \eqref{e:Pg-first-item-lem} as $\hat P_{gi}= (2\alpha + Q_{gi})^{-1} (\beta - b_{gi} -\alpha \ones^\top \hat P_{-gi})$ for every component $i\in I$. 
For every $i\in I$,  since $\hat P_{gi}>0$, then $(2\alpha + Q_{gi})^{-1} (\beta - b_{gi} -\alpha \ones^\top \hat P_{-gi})>0$, which is equivalent to say that $\gamma_i(\cdpter{\hat P_{-gi}})>0$. 
\cdpter{Hence, the vector $\hat P_g$ satisfies \eqref{e:optimal.Pg}, and is therefore a solution to \eqref{game.lq} beloging to the interior of the positive orthant. } 
}  

{\it 3)$\Leftrightarrow$4)} To complete the proof of the lemma, it suffices to show that the last two statements of the lemma are equivalent, namely 
\[
\hat P_g\in \R^n_{>0} \Leftrightarrow \ones^\top (\alpha (I+\mathds{1}  \mathds{1}^\top)  +Q_g )^{-1}(\beta \ones-b_g) \le {\frac{\beta-\overline b_g}{\alpha}},
\]
where $\hat P_g$ is given by \eqref{e:hatPg}.
From \eqref{e:hatPg}, we have
$
(\alpha (I+\ones \ones^\top) +Q_g) \hat P_g=\beta \ones - b_g,
$
which is equivalent to
$
(\alpha I+Q_g) \hat P_g= \ones (\beta -\alpha \ones^\top \hat P_g) - b_g.
$
Element-wise, this can be written as
$
(\alpha+ Q_{gi})\hat P_{gi}= \beta -\alpha \ones^\top \hat P_g - b_{gi}.
$
Therefore, $\hat P_{gi}>0$ if and only if $\beta -\alpha \ones^\top \hat P_g> \overline{b}_g$. \cdpter{By replacing $\hat P_g$ with $(\alpha (I+\mathds{1}  \mathds{1}^\top)  +Q_g )^{-1}(\beta \ones-b_g)$, we see that} the latter inequality is equivalent to \eqref{e:upperbound}.
\end{pfof}

\begin{pfof}{Lemma \ref{l:equib}}
Suppose that $(\overline \zeta, \overline y)$  is an equilibrium of \eqref{e:delta-compact}. Then,
\bse
\begin{align}
0 &=E^\top \overline  y\\
\label{e:proof-equib-delta0}
0&=-D\overline y -E \,\nabla H_\zeta (\overline \zeta)+ \overline P_g- \overline P_d.
\end{align}
\ese
Hence, we find that $y=\ones_n \hat y$ for some $\hat y\in \R$, and
\be\label{e:proof-equib-delta}
0=-D\ones \hat y -E \,\nabla H_\zeta (\overline \zeta)+ \overline P_g- \overline P_d.
\ee
By multiplying both sides of the equality above from the left by $\ones^\top$, we find $\hat y=y^*$, where the latter is given by  \eqref{e:y*}.
By replacing the expression of $y^*$ back to the equality \eqref{e:proof-equib-delta},  and noting that $E$ has full column rank, the equality \eqref{e:feas0} is obtained.

Conversely, assume that  a point $(\overline \zeta, \overline y)$ satisfies \eqref{e:y*} and \eqref{e:feas0}. Clearly, $E^\top\overline y=0$.
Moreover, note that 
\[
(I_n - \frac{D\ones \ones^\top}{\ones^\top D\ones}) (\overline P_g- \overline P_d) \in (\im \ones_n)^\perp =\im E.
\]
Hence, multiplying both sides of \eqref{e:feas0} from the left  by $E$ gives
\[
E\nabla H_\zeta (\overline\zeta) =  (I_n - \frac{D\ones \ones^\top}{\ones^\top D\ones}) (\overline P_g- \overline P_d).
\]
By the definition of $\overline y$ in \eqref{e:y*}, the  equality above can be written as \eqref{e:proof-equib-delta0},
and therefore $(\overline \zeta, \overline y)$ is an equilibrium of \eqref{e:delta-compact}.
For uniqueness of the equilibrium, it suffices to show that
\[
\nabla H_\zeta (\overline \zeta)=\nabla H_\zeta (\tilde \zeta) {\text{ for some\;}} \overline\zeta, \tilde\zeta \in \R^{n-1}
\Longrightarrow \overline \zeta= \tilde{\zeta}.
\]
The equality $\nabla H_\zeta (\overline \zeta)-\nabla H_\zeta (\tilde \zeta)=0$ is equivalent to
\[
R_\zeta \nabla H (R_\zeta^\top \overline \zeta)- R_\zeta \nabla H (R_\zeta^\top \tilde \zeta)=0.
\]
Multiplying both sides of the equality above from the left by  $(\overline \zeta- \tilde{\zeta})^\top$ returns 
$
(R_\zeta^\top \overline \zeta - R_\zeta^\top \tilde{\zeta})^\top ( \nabla H (R_\zeta^\top \overline \zeta)- \nabla H (R_\zeta^\top \tilde \zeta))=0.
$
By strict convexity of $H$, we find that $R_\zeta^\top \overline \zeta=R_\zeta^\top \tilde \zeta$. The fact that $R_\zeta$ has full row rank yields  $ \overline \zeta= \tilde{\zeta}$, which completes the proof.
\end{pfof}

\bibliographystyle{IEEEtran}
\bibliography{nima2}

\end{document}